\pgfplotsset{compat=1.15}
\theoremstyle{plain}
\newtheorem{theorem}{Theorem}
\newtheorem{lemma}[theorem]{Lemma}
\newtheorem{corollary}[theorem]{Corollary}
\theoremstyle{definition}
\newtheorem{definition}[theorem]{Definition}
\theoremstyle{remark}
\newcommand{\esp}[1]{\ensuremath{\mathbb{E} \left[ #1 \right]}}
\newcommand{\proba}[1]{\ensuremath{\mathbb{P} \left( #1 \right)}}
\DeclareMathOperator{\spn}{span}
\date{2022}
\begin{document}

\title{On the independence number of random trees via tricolourations}
\author{Etienne Bellin \footnote{etienne.bellin@polytechnique.edu}
\\
{\normalsize CMAP - Ecole Polytechnique}
}

\maketitle

\begin{abstract}
We are interested in the independence number of large random simply generated trees and related parameters, such as their matching number or the kernel dimension of their adjacency matrix. We express these quantities using a canonical tricolouration, which is a way to colour the vertices of a tree with three colours. As an application we obtain limit theorems in $L^p$ for the renormalised independence number in large simply generated trees (including large size-conditioned Bienaymé-Galton-Watson trees).
\end{abstract}

\section{Introduction}

A subset $S$ of vertices of a finite graph $G$ is called an \textit{independent set} if there is no pair of connected vertices in $S$. The \textit{independence number} of $G$, denoted by $I(G)$, is the biggest cardinal of an independent set of $G$. The independence number is a well studied quantity in computational complexity theory. It is known that computing the independence number is NP-hard in general (see e.g. \cite[Sec.~3.1.3]{garey}). A lot of work has been carried out to describe algorithms computing the independence number in general graphs \cite{robson} \cite{xiao} and in special classes of graphs where the computational time can be decreased (e.g. cubic graphs \cite{xiaodeg3}, claw-free graphs \cite{minty}, $P_5$-free graphs \cite{lokshtanov}). The independence number has also received interest in combinatorics and in probability. Upper bounds have been found using probabilistic methods for cubic graphs \cite{bolobas}. Assymptotics have been found in certain classes of random trees (e.g. conditioned Bienaymé-Galton-Watson trees \cite{devroye}, simply generated trees \cite{banderier}, random recursive trees and binary search trees \cite{fuchs}, and a wider class of random trees constructed from a Crump–Mode–Jagers branching process \cite{janson2}). Finally we mention three articles giving applications of the independence number in scheduling theory \cite{joo}, coding theory \cite{butenko} and collusion detection in voting pools \cite{araujo}. 

The goal of this article is to study the independence number of large simply generated trees. Simply generated trees are a wide class of random plane trees (i.e.~rooted and ordered trees) introduced in \cite{meir} and encompass Bienaymé-Galton-Watson trees (BGW trees for short) conditioned to have a fixed number of vertices. Informally, a BGW tree with offspring distribution $\mu$ is a plane tree where vertices have an i.i.d.~number of children with law $\mu$. Various natural models of random trees are obtained with appropriate choices of the offspring distribution: e.g., uniform plane trees, uniform plane d-ary trees and uniform Cayley trees (see \cite[Sec.\,10]{janson}). In order to study the independence number, we will use a particular tricolouration of trees introduced in \cite{zito} and later studied in \cite{coulomb}, \cite{bauer} and \cite{chapoton}. This colouring is based on the notion of covering. A \textit{covering} of a finite tree $T$ is a subset of vertices $S$ of $T$ such that every edge of $T$ is adjacent to a vertex of $S$. A \textit{smallest covering} of $T$ is a covering with minimal cardinality. In general, a tree has more than one smallest covering. For every vertex $v$ of $T$ we colour $v$ in the following way:
\begin{itemize}
    \item If $v$ belongs to every smallest covering, we colour $v$ in green.
    \item If $v$ belongs to no smallest covering, we colour it in red.
    \item If $v$ belongs to some smallest coverings but not all, we colour it in orange.
\end{itemize}

\begin{figure}
\centering
\includegraphics[scale=0.4]{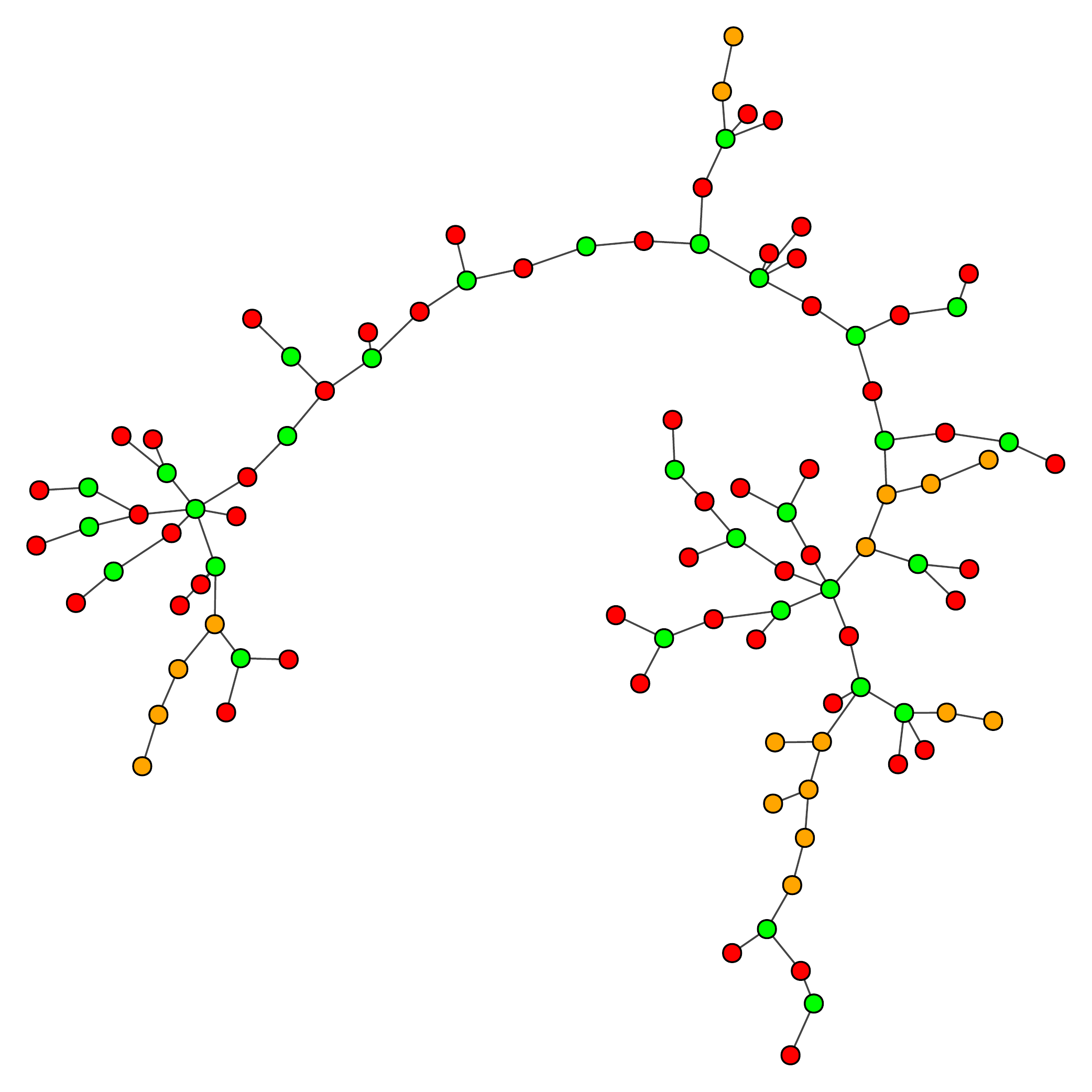}
\caption{Tricolouration of a BGW tree with 100 vertices and a Poisson offspring distribution of parameter 1. The algorithm used to tricolour this tree can be found in \cite[Appendix~A]{chapoton}.}
\end{figure}

\medskip

For a tree $T$, denote by $n_g(T)$, $n_o(T)$ and $n_r(T)$, respectively, the number of green, orange and red vertices in $T$. It has been noticed in \cite{bauer} that the size of a smallest covering of a tree $T$ is equal to $n_g(T)+n_o(T)/2$. Since the complementary of a smallest covering is an independent set of maximal size, the independence number of $T$ is $n_r(T)+n_o(T)/2$. Actually, other statistics of the tree $T$ can be expressed as a linear combination of $n_g(T)$, $n_o(T)$ and $n_r(T)$. For instance the \textit{matching number} $M(T)$ (i.e.~the maximum size of a partial vertex matching) is equal to the size of a smallest covering which is $n_g(T)+n_o(T)/2$. The \textit{nullity} $N(T)$ (i.e.~the kernel dimension of the adjacency matrix) is $n_r(T)-n_g(T)$. The \textit{edge cover number} and the \textit{clique cover number} also coincide with the independence number on trees (see \cite{fuchs}). Our main result (Theorem \ref{main thm}) concerns simply generated trees, but, to keep this introduction short, let us state here a particular case for critical BGW trees.
\begin{theorem}
\label{weak main thm}
Let $T_n$ be a Bienaymé-Galton-Watson tree with reproduction law $\mu$, conditionned on having $n$ vertices. Denote by $G(t):=\sum_{k=0}^\infty \mu_kt^k$ the generating function of $\mu$ and let $q$ be the unique solution of $G(1-q)=q$ in $[0,1]$. Suppose that $\mu$ has mean 1, then, the following convergences hold in $L^p$ for every $p>0$:
\begin{equation*}
\frac{n_g(T_n)}{n} \xrightarrow[n \to \infty]{L^p} \frac{1-q+(1-2q)G'(1-q)}{1+G'(1-q)}, ~~~~~~~~ \frac{n_o(T_n)}{n} \xrightarrow[n \to \infty]{L^p} \frac{2\,q\,G'(1-q)}{1+G'(1-q)},  
\end{equation*}
\begin{equation*}
    \frac{n_r(T_n)}{n} \xrightarrow[n \to \infty]{L^p} \frac{q}{1+G'(1-q)}.
\end{equation*}

\end{theorem}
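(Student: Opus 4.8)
For a rooted tree write $\alpha(T)$, $\alpha_r(T)$, $\alpha_{\bar r}(T)$ for the maximal size of an independent set, of one containing the root, and of one avoiding it. Since minimum vertex covers are complements of maximum independent sets, a vertex $v$ of $T$ is green (resp.\ red, orange) precisely when $v$ lies in no (resp.\ every, some but not every) maximum independent set, that is when $\delta(v):=\alpha_v(T)-\alpha_{\bar v}(T)$ is negative (resp.\ positive, zero). Orienting each edge both ways, I attach to every oriented edge $u\to w$ a message $m_{u\to w}\in\{+,0,-\}$, the sign of $\delta$ of $u$ computed in the component of $u$ in $T$ with the edge $uw$ deleted. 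Splitting a subtree at its root and using $\alpha_{\bar r}=\sum_i\alpha(T_{c_i})$ and $\alpha_r=1+\sum_i\alpha_{\bar c_i}(T_{c_i})$ gives the recursion: $m_{u\to w}=+$ if no neighbour of $u$ other than $w$ sends $+$ to $u$, it equals $0$ if exactly one does, and $-$ if at least two do. The same computation at $v$ yields $\delta(v)=1-\#\{\text{neighbours }w: m_{w\to v}=+\}$, so the colour of $v$ is determined by the number of incident $+$ messages: red if none, orange if exactly one, green if at least two. Thus each colour count is a local functional of the tree, which is the key to using a local limit.

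\textbf{Step 2 (transfer to the local weak limit).} Writing $n_c(T_n)=\sum_v\ind{v\text{ has colour }c}$ gives $\esp{n_c(T_n)/n}=\proba{V\text{ has colour }c}$ for $V$ a uniform vertex. For critical $\mu$, $T_n$ converges in the Benjamini--Schramm sense to Kesten's tree: from $V$ one sees below an ordinary $\mathrm{BGW}(\mu)$ tree, and above an infinite spine $V=v_0,v_1,v_2,\dots$ along which each strict ancestor has the size-biased offspring law $\hat\mu_k=k\mu_k$, one child being the next spine vertex and the others rooting independent $\mathrm{BGW}(\mu)$ trees. Since the colour of $V$ depends only on its incident messages, it suffices to find, in this limit, the joint law of the messages sent to $V$ by its children and by $v_1$. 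An upward message of a $\mathrm{BGW}(\mu)$ root is $+$ with probability $p_+$ solving $p_+=\esp{(1-p_+)^K}=G(1-p_+)$ with $K\sim\mu$, hence $p_+=q$. The downward messages $D_i=m_{v_{i+1}\to v_i}$ form a Markov chain: $D_i=+$ iff none of the $\hat K-1$ non-spine children of $v_{i+1}$ sends $+$ (probability $\esp{(1-q)^{\hat K-1}}=G'(1-q)$) and $D_{i+1}\neq+$, two independent events; its stationary probability $d_+:=\proba{D=+}$ solves $d_+=G'(1-q)(1-d_+)$, so $d_+=G'(1-q)/(1+G'(1-q))$.

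\textbf{Step 3 (assembling the constants).} Let $C$ be the number of children of $V$ sending $+$ (given $K\sim\mu$ children, each $+$ with probability $q$), independent of $D$. Then $\proba{C=0}=G(1-q)=q$ and $\proba{C=1}=q\,G'(1-q)$. A vertex is red iff $C=0$ and $D\neq+$, orange iff exactly one of $C$ and $\ind{D=+}$ equals one, green otherwise; multiplying out,
\begin{align*}
\proba{\text{red}} &= q\cdot\tfrac{1}{1+G'(1-q)}, \\
\proba{\text{orange}} &= q\,G'(1-q)\cdot\tfrac{1}{1+G'(1-q)}+q\cdot\tfrac{G'(1-q)}{1+G'(1-q)}=\tfrac{2q\,G'(1-q)}{1+G'(1-q)},
\end{align*}
and $\proba{\text{green}}=1-\proba{\text{red}}-\proba{\text{orange}}$ simplifies to $\bigl(1-q+(1-2q)G'(1-q)\bigr)/(1+G'(1-q))$, matching the three claimed limits for $\esp{n_c(T_n)/n}$.

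\textbf{Step 4 ($L^p$ upgrade and the main obstacle).} Each $n_c(T_n)/n$ lies in $[0,1]$, so once convergence in probability to the stated constant is known, bounded convergence gives $L^p$ convergence for every $p>0$; convergence in probability follows from the mean computation above together with a second-moment estimate $\mathrm{Var}(n_c(T_n)/n)\to0$. The crux, and the hardest step, is this concentration, because the colour of $V$ is \emph{not} a fringe functional: it involves the downward message, which a priori depends on the whole ancestral line. The saving feature is that the downward chain is exponentially mixing, since $D_0=+$ is decided by scanning ancestors only up to the first one carrying a non-spine $+$ child, a geometrically bounded height (here $G'(1-q)<1$). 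Hence the colour of $V$ is determined, up to exponentially small probability, by a neighbourhood of bounded size. This almost-locality both legitimises the interchange of limits in Step~2, by controlling the discrepancy between $T_n$ and its local limit over a bounded window, and provides the decay of correlations between the colours of two distant vertices needed for the variance bound.
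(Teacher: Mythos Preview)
Your Steps 1--3 are correct and essentially parallel the paper's approach: your oriented-message recursion is exactly the content of the paper's Lemma on $T_1*\cdots*T_n$ (a root is red/orange/green according as $0$, $1$, or $\ge 2$ children are red), and your stationary downward probability $d_+=G'(1-q)/(1+G'(1-q))$ is the quantity the paper calls $\tilde q$. The constants you assemble match.

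The genuine gap is in Step~4. Almost-locality of the colour (determined with high probability by a ball of bounded radius $R$) is \emph{not} sufficient, by itself, to yield $\mathrm{Var}(n_c(T_n)/n)\to 0$. Your sentence ``provides the decay of correlations between the colours of two distant vertices'' would be valid on a \emph{deterministic} graph, where disjoint balls carry independent information; but here $T_n$ is itself random, conditioned to have $n$ vertices, and this global conditioning correlates the contents of two disjoint balls. Concretely, $\mathrm{Cov}(f^{(R)}_u,f^{(R)}_v)$ need not vanish just because $d(u,v)>2R$: what is required is that the \emph{joint law} of the $R$-neighbourhoods of two independent uniform vertices in $T_n$ factorises asymptotically. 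Single-point Benjamini--Schramm convergence to Kesten's tree says nothing about this two-point behaviour, and you have not supplied a substitute.

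The paper fills precisely this gap by a different mechanism: it encodes $T_n$ by its \L ukasiewicz walk, observes that the events $A_n(t)$ and $A'_n(t')$ (that two uniform vertices sit inside prescribed local patterns $t,t'$) correspond to fixing two disjoint windows of increments of the walk, applies the Vervaat bijection to replace the excursion conditioning by a bridge conditioning $\{S_n=-1\}$, and then invokes a finite-dimensional result (Janson's Theorem~11.7) to show that finitely many increments are asymptotically i.i.d.\ under the bridge. This is what delivers $\proba{A_n(t)\cap A'_n(t')}\to \proba{A_n(t)}\proba{A'_n(t')}$ and hence the second-moment convergence. Your exponential-mixing observation is useful for reducing to bounded windows (and the paper uses the analogous ``first good ancestor'' truncation), but the asymptotic independence of two such windows in the conditioned tree is a separate fact that you still need to prove or cite.
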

Explicit computations of the expected number of green, orange and red vertices have been carried out in the case of a uniform Cayley tree with fixed size in \cite{bauer} using generating functions. This confirms the convergence, in mean value, of Theorem \ref{weak main thm}. Indeed, it is well known that a BGW tree with Poisson distribution of parameter 1 conditioned to have $n$ vertices has the same law as a uniform Cayley tree with $n$ vertices. As we said before, different quantities such as the independence number $I(T)$, the matching number $M(T)$ and the nullity $N(T)$ of a tree $T$ can be expressed in terms of $n_g(T)$, $n_o(T)$ and $n_r(T)$, so Theorem \ref{weak main thm} yields limit theorems for these in the case of critical BGW trees.
\begin{corollary}
\label{weak cor main thm}
With the same notation and hypothesis as in Theorem \ref{weak main thm}, the following convergences hold in $L^p$ for every $p>0$:
\begin{equation*}
\frac{I(T_n)}{n} \xrightarrow[n \to \infty]{L^p} q, ~~~~~~~~
\frac{M(T_n)}{n} \xrightarrow[n \to \infty]{L^p} 1-q, ~~~~~~~~
\frac{N(T_n)}{n} \xrightarrow[n \to \infty]{L^p} 2q-1.
\end{equation*}
\end{corollary}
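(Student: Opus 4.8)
The plan is to deduce the three statements directly from Theorem \ref{weak main thm} using the linear identities for $I(T)$, $M(T)$ and $N(T)$ recalled in the introduction, namely
\begin{equation*}
I(T)=n_r(T)+\tfrac12 n_o(T), \qquad M(T)=n_g(T)+\tfrac12 n_o(T), \qquad N(T)=n_r(T)-n_g(T).
\end{equation*}
Dividing each identity by $n$ expresses $I(T_n)/n$, $M(T_n)/n$ and $N(T_n)/n$ as fixed linear combinations of $n_g(T_n)/n$, $n_o(T_n)/n$ and $n_r(T_n)/n$, whose $L^p$ limits are already furnished by Theorem \ref{weak main thm}.

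The key step is that $L^p$ convergence is stable under such finite linear combinations. Since the three limits in Theorem \ref{weak main thm} are deterministic constants, it suffices to check that if $X_n \xrightarrow[n \to \infty]{L^p} a$ and $Y_n \xrightarrow[n \to \infty]{L^p} b$, then $\alpha X_n + \beta Y_n \xrightarrow[n \to \infty]{L^p} \alpha a + \beta b$ for all reals $\alpha,\beta$. For $p \ge 1$ this is immediate from Minkowski's inequality; for $0 < p < 1$ one instead uses the subadditivity $|x+y|^p \le |x|^p + |y|^p$, which gives $\esp{|\alpha X_n + \beta Y_n - \alpha a - \beta b|^p} \le |\alpha|^p \esp{|X_n-a|^p} + |\beta|^p \esp{|Y_n-b|^p} \to 0$. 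Either way, the limit of a linear combination is the corresponding linear combination of the limits.

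It then remains to compute these limits explicitly. Writing $G' := G'(1-q)$ for brevity, Theorem \ref{weak main thm} provides the limits $\tfrac{1-q+(1-2q)G'}{1+G'}$, $\tfrac{2qG'}{1+G'}$ and $\tfrac{q}{1+G'}$ for the green, orange and red proportions respectively. Substituting these into the three identities, the factors involving $G'$ cancel pleasantly: for the independence number, $\tfrac{q}{1+G'}+\tfrac{qG'}{1+G'}=\tfrac{q(1+G')}{1+G'}=q$; for the matching number, $\tfrac{1-q+(1-2q)G'}{1+G'}+\tfrac{qG'}{1+G'}=\tfrac{(1-q)(1+G')}{1+G'}=1-q$; and for the nullity, $\tfrac{q}{1+G'}-\tfrac{1-q+(1-2q)G'}{1+G'}=\tfrac{(2q-1)(1+G')}{1+G'}=2q-1$. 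This yields the three announced limits.

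Since the whole argument reduces to linear algebra combined with the elementary stability of $L^p$ convergence under linear combinations, there is no substantial obstacle here: the only point requiring a moment of care is the regime $0 < p < 1$, where $\|\cdot\|_{L^p}$ fails to be a norm and one must invoke the subadditivity of $t \mapsto t^p$ rather than the triangle inequality.
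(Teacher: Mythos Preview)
Your proof is correct and follows exactly the approach the paper intends: the corollary is not given a separate proof in the paper, which simply observes that $I$, $M$ and $N$ are fixed linear combinations of $n_g$, $n_o$, $n_r$ and hence inherit their $L^p$ limits from Theorem~\ref{weak main thm}. Your write-up just makes this explicit, including the minor point about $0<p<1$.
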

With the same hypothesis, the authors of \cite{devroye} show the convergence of $I(T_n)/n$ in probability towards $q$. Moreover, in \cite{banderier} the convergence of the first and second moment of $I(T_n)/n$ is studied. More precisely, it is shown that $\esp{I(T_n)}=nq+o(1)$ and $V(I(T_n))=\nu n+o(1)$ for some constant $\nu$. 
The main tool to prove Theorem \ref{main thm} is the use of limit theorems for uniformly pointed simply generated trees found in \cite{stufler} (see Section \ref{sec limit thm}). In the first section we introduce simply generated trees. In the next section we state our main result in its most general form. To prove our main result, we first explain the limit theorems of \cite{stufler} in the third section. The next two sections give properties of the tricolouration and describe how to colour the limiting trees. Finally, in the last section we prove Theorem \ref{main thm}.

\section{Simply generated trees}
\label{sec simply gen trees}

Let $\mathbf w := (w_i)_{i \geq 0}$ be a sequence of nonnegative weights. A \textit{simply generated tree} having $n$ vertices with weight sequence $\mathbf w$ is a random plane tree $T_n$ such that for every finite plane tree $T$, 
\begin{equation*}
\proba{T_n=T}=\frac{1}{Z_n} \left(\prod_{v \in T} w_{k_v}\right) \mathds{1}_{|T|=n}
\end{equation*}
where $k_v$ is the outdegree of the vertex $v$ in $T$, $|T|$ is the number of vertices in $T$ and $Z_n$ is the normalising constant defined by
\begin{equation*}
Z_n := \sum_{|T|=n} \prod_{v \in T} w_{k_v}.
\end{equation*}
Notice that, when the weight sequence $\mathbf{w}$ is actually a probability sequence (i.e.~the sum of the weights is equal to 1) then we recover the class of BGW trees. For $T_n$ to be well defined, one needs $Z_n$ to be nonzero. First of all, suppose that $w_0 > 0$ and $w_k > 0$ for some $k \geq 1$ otherwise $Z_n =0$ for all $n \geq 1$. Let $\spn(\mathbf w) := \gcd \{i\geq 0\,|\, w_i >0\}$ (since $w_k>0$ this quantity is well defined). The following result, found for instance in \cite[Cor.~15.6]{janson}, characterises the $n$'s such that $Z_n > 0$:
\begin{lemma}[Janson 2012]
If $Z_n > 0$ then $n \equiv 1 \mod \spn(\mathbf w)$. Conversely, there exists $n_0$ such that for all $n \geq n_0$ satisfying $n \equiv 1 \mod \spn(\mathbf w)$, $Z_n >0$.
\end{lemma}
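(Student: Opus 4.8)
The plan is to pin down the set $\{n \geq 1 : Z_n > 0\}$ exactly and then read off both assertions. Write $S := \{i \geq 0 : w_i > 0\}$ and $S^+ := S \setminus \{0\}$, so that $0 \in S$ (because $w_0 > 0$), $S^+ \neq \emptyset$, and $d := \spn(\mathbf w) = \gcd S = \gcd S^+$, since adjoining $0$ to a set does not change its gcd. The one structural fact I would use throughout is that any plane tree $T$ with $|T| = n$ satisfies $\sum_{v \in T} k_v = n - 1$, the left-hand side being a count of the edges of $T$.

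For the first (easy) implication, suppose $Z_n > 0$. Then some plane tree $T$ with $|T| = n$ has $\prod_{v \in T} w_{k_v} > 0$, which forces $k_v \in S$ for every vertex $v$. Every element of $S$ is a multiple of $d$, so $n - 1 = \sum_{v \in T} k_v$ is a multiple of $d$, i.e. $n \equiv 1 \bmod \spn(\mathbf w)$. The same identity says slightly more: the vertices with $k_v = 0$ contribute nothing, so $n - 1$ is a sum of elements of $S^+$; that is, $n - 1$ lies in the numerical semigroup $\langle S^+ \rangle$ of non-negative integer combinations of $S^+$. Hence $\{n \geq 1 : Z_n > 0\} \subseteq 1 + \langle S^+ \rangle$.

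For the converse inclusion I would argue constructively by grafting, which avoids invoking any realizability lemma. Starting from the one-vertex tree (valid since $w_0 > 0$), whenever we have a \emph{valid} tree, meaning one all of whose outdegrees lie in $S$, we may select any of its leaves and attach $k$ new children to it for some $k \in S^+$: the chosen leaf then has outdegree $k \in S^+$ and the $k$ added vertices are leaves, so the enlarged tree is again valid and has exactly $k$ more vertices. Since every finite tree has a leaf, iterating realizes any finite sum $s = k_1 + \cdots + k_m$ with $k_j \in S^+$ as a valid tree on $1 + s$ vertices, whence $Z_{1+s} > 0$ for every $s \in \langle S^+ \rangle$. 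Together with the previous paragraph this gives the clean equality $\{n \geq 1 : Z_n > 0\} = 1 + \langle S^+ \rangle$.

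It then remains to invoke the classical Sylvester--Frobenius (``Chicken McNugget'') theorem. Replacing $S^+$ if necessary by a finite subset with the same gcd $d$, the numerical semigroup generated by $S^+/d$ has gcd $1$ and so contains every sufficiently large integer; consequently $\langle S^+ \rangle$ contains every sufficiently large multiple of $d$. Taking $n_0$ to be one more than the corresponding Frobenius bound yields the claim: for all $n \geq n_0$ with $n \equiv 1 \bmod \spn(\mathbf w)$ we have $n - 1 \in \langle S^+ \rangle$, hence $Z_n > 0$. The step I expect to require the most care is this converse direction, specifically the appeal to the Frobenius theorem to guarantee that only finitely many admissible values $n \equiv 1 \bmod d$ can fail, together with the bookkeeping that the grafting construction keeps all outdegrees inside $S$ while producing arbitrary elements of $\langle S^+ \rangle$; the forward implication is immediate from the edge-count identity.
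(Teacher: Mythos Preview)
The paper does not prove this lemma at all: it is simply quoted with attribution to Janson's survey \cite[Cor.~15.6]{janson}, so there is no in-paper argument to compare against. Your proof is correct and self-contained; the identification $\{n\ge 1:Z_n>0\}=1+\langle S^+\rangle$ via the edge-count identity and the grafting construction is clean, and the appeal to the Sylvester--Frobenius theorem for the converse is exactly the right tool.
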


\paragraph*{Throughout this document and in Theorem \ref{main thm}, we suppose that $w_0 > 0$, $w_k > 0$ for some $k \geq 1$ and that all the $n$'s appearing satisfy $n \geq n_0$ and $n \equiv 1 \mod \spn(\mathbf w)$.}
Let $\rho \in [0,+\infty]$ be the radius of convergence of the generating series
\begin{equation*}
\phi(x) := \sum_{i \geq 0} w_i x^i.
\end{equation*}
It is shown in \cite[Lemma 3.1]{janson} that, if $\rho > 0$, then the function defined by
\begin{equation*}
\psi(x) := \frac{x \phi'(x)}{\phi(x)}
\end{equation*}
is increasing on $[0,\rho)$ and we can define $\nu:=\lim_{x \to \rho} \psi(x) \in (0,+\infty]$. We distinguish three different regimes:
\begin{itemize}
    \item Regime 1 when $\rho >0$ and $\nu \geq 1$. In this case there is a unique $\tau \in [0,\rho]$ such that $\tau< +\infty$ and $\psi(\tau)=1$.
    \item Regime 2 when $\rho >0$ and $0<\nu<1$. In this case $\rho < +\infty$ and we set $\tau := \rho$.
    \item Regime 3 when $\rho =0$.
\end{itemize}
In regime 1 and 2 we can define a probability function given by
\begin{equation}
\label{eq proba equivalente}
\pi_k := \frac{\tau^k w_k}{\phi(\tau)}.    
\end{equation}
The associated mean and generating function are respectively given by
\begin{equation}
\label{eq mean and gen function}
m := \min(1,\nu) ~~~~\text{ and }~~~~G(x):=\frac{\phi(\tau x)}{\phi(x)}.
\end{equation}
An important result of \cite{janson} is that, $T_n$, in regime 1 or 2, has the same law as a BGW tree with reproduction law $\pi$ conditionned to have $n$ vertices. In regime 3, $T_n$ is not distributed like a conditionned BGW tree. Note that a critical or super-critical BGW tree or a BGW tree with a reproduction law with infinite mean, conditioned to have $n$ vertices, always lays in regime 1. Moreover for a critical BGW tree with reproduction law $\mu$, the probability $\pi$ is the same as $\mu$. A sub-critical BGW tree, conditionned to have $n$ vertices, is either in regime 1 or in regime 2. We define $\textit{complete condensation}$ to be the condition:
\begin{equation}
\label{eq condensation}
\Delta(T_n) = (1-m)n+n\mathcal E_n
\end{equation}
where $\Delta(T_n)$ is the maximum degree of a vertex of $T_n$ and $\mathcal E_n$ is a random variable converging in probability towards 0. For instance, complete condensation happens in regime 2 when there exists $\theta >1$ and a slowly varying function $\ell$ such that $\pi_k = \ell(k)k^{-(1+\theta)}$ (see \cite{kortchemski}). Complete condensation also happens in regime 3 for the weight sequence $w_k = k!^\alpha$ for $\alpha>0$ (see \cite[Ex.~19.36]{janson}).

\section{Main results}
\label{sec main results}

In this section we state our main results. We keep all the notations and assumptions of Section \ref{sec simply gen trees}. 

\begin{theorem}
\label{main thm}
Let $T_n$ be a simply generated tree with $n$ vertices according to the weight sequence $\mathbf w = (w_i)_{i\geq 0}$. Recall that, in regime 1 and 2, $G$ denotes the generating function of $\pi$ defined in (\ref{eq proba equivalente}). Let $q$ be the unique solution of $q=G(1-q)$ in $[0,1]$.
\begin{enumerate}
\item In regime 1 and in regime 2 with complete condensation (meaning that (\ref{eq condensation}) is satisfied), the following convergences hold in $L^p$ for every $p>0$:
\begin{equation*}
\frac{n_g(T_n)}{n} \xrightarrow[n \to \infty]{L^p} \frac{1-q+(1-2q)G'(1-q)}{1+G'(1-q)}, ~~~~~~~~ \frac{n_o(T_n)}{n} \xrightarrow[n \to \infty]{L^p} \frac{2\,q\,G'(1-q)}{1+G'(1-q)},  
\end{equation*}
\begin{equation*}
\frac{n_r(T_n)}{n} \xrightarrow[n \to \infty]{L^p} \frac{q}{1+G'(1-q)}.
\end{equation*}

\item In regime 3 with complete condensation, the following convergences hold in $L^p$ for every $p>0$:
\begin{equation*}
\frac{n_g(T_n)}{n} \xrightarrow[n \to \infty]{L^p} 0, ~~~~~~~~ \frac{n_o(T_n)}{n} \xrightarrow[n \to \infty]{L^p} 0, ~~~~~~~~ \frac{n_r(T_n)}{n} \xrightarrow[n \to \infty]{L^p} 1.
\end{equation*}

\end{enumerate}
\end{theorem}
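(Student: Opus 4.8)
The plan is to reduce each normalised colour count to a one-vertex probability and then to evaluate it on the local limit of $T_n$ furnished by the theorems of \cite{stufler} (Section~\ref{sec limit thm}). Writing $n_c(T_n)=\sum_{v}\ind{v \text{ has colour } c}$ and averaging over vertices gives $\esp{n_c(T_n)}/n=\proba{V \text{ has colour } c \text{ in } T_n}$, where $V$ is uniform among the vertices. Stufler's results identify the local (Benjamini--Schramm) limit of the pointed tree $(T_n,V)$ as a random infinite pointed tree $\hat{\mathcal T}$: in regime~1 a critical Kesten tree with offspring law $\pi$, in regime~2 (under complete condensation) essentially the same object but with the spine truncated by a single vertex of infinite degree, and in regime~3 a star. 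I would first show that the colour of $V$ is an \emph{almost-local} functional of $(T_n,V)$, then compute its law under $\hat{\mathcal T}$, and finally upgrade the resulting convergence of means to convergence in $L^p$.

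The tool that makes the colour almost-local is the recursive (belief-propagation) description of minimal coverings, which I would take from the properties of the tricolouration established earlier. For a vertex $w$ let $D_w$ be the minimal size of a covering of the fringe subtree $T_w$ containing $w$ minus the minimal size of one avoiding $w$; then $D_w=1-\sum_{c}(D_c)^+$ over the children $c$ of $w$, with $(x)^+:=\max(x,0)$, and since the messages are integers $\le 1$ one has $(D_c)^+=\ind{D_c=1}$. Dually, let $U_w$ be the analogous difference for the complementary tree hanging above $w$, so that $U_w=1-(U_p)^+-\sum_{c}(D_c)^+$, where $p$ is the parent of $w$, $c$ ranges over the siblings of $w$, and $(U_{\mathrm{root}})^+:=0$. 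One then verifies that the colour of $w$ is governed by $D(w):=D_w-(U_w)^+$, with $w$ red if $D(w)>0$, green if $D(w)<0$, and orange if $D(w)=0$. Truncating both recursions at depth $R$ alters the colour of $V$ only on an event of probability tending to $0$ as $R\to\infty$, which is what lets the colour pass to the local limit.

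On $\hat{\mathcal T}$ the two messages decouple and can be computed explicitly. For the downward message, $\proba{D_V=1}$ (the probability that $V$ lies in no minimal covering of its own subtree) equals the probability that no child of $V$ is of this type, so it solves $r=\esp{(1-r)^N}=G(1-r)$, giving $r=q$; a one-term refinement gives $\proba{D_V=0}=q\,G'(1-q)$. For the upward message I would reduce it along the spine to the binary chain $\beta_{i-1}=(1-\beta_i)\ind{S_i=0}$, where $\beta_i:=\ind{U_{u_i}=1}$ and $S_i$ counts the red-in-subtree non-spine children of the $i$-th ancestor $u_i$. In regime~1 the spine is infinite with size-biased offspring and $\proba{S_i=0}=G'(1-q)$, so the stationary law gives $\proba{U_V\le 0}=1/(1+G'(1-q))$; in regime~2 the spine has a geometric length before reaching the infinite-degree vertex (which forces $\beta=0$ there), and averaging the finite chain over this geometric length yields the \emph{same} value $1/(1+G'(1-q))$. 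Since $V$ is red iff $D_V=1$ and $U_V\le 0$, and the subtree below $V$ is independent of the structure above it in the limit, I obtain $\proba{V \text{ red}}=q/(1+G'(1-q))$, and likewise the orange and green probabilities, matching part~1. In regime~3 the limit is a star about an infinite-degree vertex, so $V$ is almost surely a leaf and hence red, giving part~2.

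It remains to strengthen $\esp{n_c(T_n)}/n\to\text{const}$ to the $L^p$ statement. As the normalised counts take values in $[0,1]$, convergence in probability suffices, and then bounded convergence yields $L^p$ convergence for every $p>0$. Convergence in probability I would get from a second moment bound: two independent uniform vertices lie at distance tending to infinity, so a two-point version of the local limit makes their colours asymptotically independent and forces $\mathrm{Var}(n_c(T_n)/n)\to 0$. I expect the main difficulty to be the almost-local determination of the colour, which is a genuinely global quantity defined through all minimal coverings: one must control the upward message $U_w$, which a priori depends on the entire path to the root, and show that it stabilises under the local limit. This is exactly where complete condensation enters, since it locates the unique macroscopic-degree vertex through which that message must pass, simultaneously truncating the spine recursion (regimes~2 and 3) and contributing negligibly --- being a single vertex --- to the bulk colour counts.
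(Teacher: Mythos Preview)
Your overall strategy---reduce to $\proba{v_n\text{ has colour }c}$, pass to the local limit via Stufler's theorem, then control the variance---is the same as the paper's. Your belief-propagation messages $D_w,U_w$ are a repackaging of the paper's device: the paper calls a spine vertex $u_k$ \emph{good} when at least two of its non-spine children are red in their own fringe subtrees, observes (Lemma~\ref{lemme jointures}) that a good vertex is green, and then uses Lemma~\ref{lemme jointure} to say that a green vertex decouples the tricolouration above from the tricolouration below. This is exactly the mechanism by which your upward message $U_w$ ``stabilises'': the first good ancestor forces the message and everything above it becomes irrelevant. Your computation of the limiting probabilities is essentially the content of Lemma~\ref{lemme calcul pic}.

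The one place where your proposal is genuinely thinner than the paper is the second moment. You invoke a ``two-point version of the local limit'' to get asymptotic independence of the colours of two uniform vertices, but no such statement is provided by \cite{stufler}, and it is not immediate. The paper instead encodes $T_n$ by its \L ukasiewicz walk, uses the cycle lemma (reverse Vervaat transform) to turn the excursion conditioning $\{\text{first hit of }-1\text{ at time }n\}$ into the bridge conditioning $\{S_n=-1\}$, and then appeals to \cite[Thm.~11.7]{janson} to get that finitely many increments are asymptotically i.i.d.\ under the bridge conditioning. Since the events $A_n(t)$ and $A'_n(t')$ are read off from disjoint windows of increments (with probability tending to~$1$), this yields $\proba{A_n(t)\cap A'_n(t')}\to\proba{t\preceq T_i^*}\proba{t'\preceq T_i^*}$ directly. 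If you want to keep your formulation, you would need to supply an analogous argument; simply asserting that two uniform vertices are far apart is not enough, because the two fringe neighbourhoods are still coupled through the global conditioning $|T_n|=n$.

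A minor remark: in regime~3 the paper does not bother with the second moment at all, since $p_3(\mathrm{red})=1$ and Lemma~\ref{lemme cvg lp} then gives $L^p$ convergence from the first moment alone. Your proposal could make the same simplification.
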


\begin{corollary}
\label{cor main thm}
We keep the same notation and hypothesis as in Theorem \ref{main thm}. Recall that $I(T_n)$, $M(T_n)$ and $N(T_n)$ are, respectively, the independence number, the matching number and the nullity of $T_n$. 
\begin{enumerate}
\item In regime 1 and in regime 2 with complete condensation, the following convergences hold in $L^p$ for every $p>0$:
\begin{equation*}
\frac{I(T_n)}{n} \xrightarrow[n \to \infty]{L^p} q, ~~~~~~~~
\frac{M(T_n)}{n} \xrightarrow[n \to \infty]{L^p} 1-q, ~~~~~~~~
\frac{N(T_n)}{n} \xrightarrow[n \to \infty]{L^p} 2q-1.
\end{equation*}
\item In regime 3 with complete condensation, the following convergences hold in $L^p$ for every $p>0$:
\begin{equation*}
\frac{I(T_n)}{n} \xrightarrow[n \to \infty]{L^p} 1, ~~~~~~~~
\frac{M(T_n)}{n} \xrightarrow[n \to \infty]{L^p} 0, ~~~~~~~~
\frac{N(T_n)}{n} \xrightarrow[n \to \infty]{L^p} 1.
\end{equation*}
\end{enumerate}
\end{corollary}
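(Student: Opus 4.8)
The plan is to derive the corollary directly from Theorem~\ref{main thm}, using the three exact identities recorded in the introduction, namely
\[
I(T) = n_r(T) + \tfrac12\, n_o(T), \qquad M(T) = n_g(T) + \tfrac12\, n_o(T), \qquad N(T) = n_r(T) - n_g(T),
\]
which hold for every finite tree $T$: the first because the complement of a smallest covering is a maximum independent set (and $n_g+n_o+n_r=|T|$), the second because the matching number coincides with the size of a smallest covering, and the third by the quoted expression for the nullity. Since each of $I$, $M$ and $N$ is a fixed linear combination of $n_g$, $n_o$ and $n_r$ with constant rational coefficients, dividing by $n$ reduces the whole statement to the convergences already furnished by Theorem~\ref{main thm}.

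First I would record that $L^p$ convergence is preserved under finite linear combinations with fixed coefficients. For $p \geq 1$ this follows from the triangle inequality in $L^p$ together with homogeneity of the norm; for the range $0 < p < 1$, where Minkowski's inequality is unavailable, one instead invokes the elementary subadditivity $|a+b|^p \leq |a|^p + |b|^p$, so that $\esp{|X_n - X|^p} \to 0$ for each summand forces the same for the combination. Applying this to $I(T_n)/n = n_r(T_n)/n + \tfrac12\, n_o(T_n)/n$, and likewise for $M(T_n)/n$ and $N(T_n)/n$, the limit of each quantity is obtained simply by taking the matching linear combination of the limits supplied by Theorem~\ref{main thm}.

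It then remains to verify the algebra. In regime 1 and in regime 2 with complete condensation, writing $D := G'(1-q)$ for brevity, the limit of $I(T_n)/n$ is
\[
\frac{q}{1+D} + \frac12\cdot\frac{2qD}{1+D} = \frac{q(1+D)}{1+D} = q,
\]
and the entirely analogous cancellations $1-q+(1-2q)D+qD=(1-q)(1+D)$ and $q-(1-q)-(1-2q)D=(2q-1)(1+D)$ yield $M(T_n)/n \to 1-q$ and $N(T_n)/n \to 2q-1$; I would point out that these simplifications are purely formal and do not even require the defining relation $q=G(1-q)$. In regime 3 with complete condensation the colour densities converge to $0$, $0$ and $1$, whence $I(T_n)/n \to 1$, $M(T_n)/n \to 0$ and $N(T_n)/n \to 1$ at once.

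I do not expect a genuine obstacle here, as the corollary is a formal consequence of Theorem~\ref{main thm} combined with the linear identities; the only point demanding a moment's care is the justification of stability of $L^p$ convergence under linear combinations in the regime $0 < p < 1$, and the bookkeeping ensuring that the stated limits are exactly the claimed linear combinations of the colour densities.
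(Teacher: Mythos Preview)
Your proposal is correct and follows exactly the route the paper intends: the corollary is treated as an immediate consequence of Theorem~\ref{main thm} via the linear identities $I=n_r+\tfrac12 n_o$, $M=n_g+\tfrac12 n_o$, $N=n_r-n_g$ stated in the introduction, and the paper does not spell out a separate proof. If anything, you supply more detail than the paper (the stability of $L^p$ convergence for $0<p<1$ and the explicit algebraic simplifications), which is fine.
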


\section{Limit theorems for uniformly pointed simply generated trees}
\label{sec limit thm}

In this section we explain the results proved in \cite{stufler} which will be our basic tool to prove Theorem \ref{main thm}. All the proofs and details of this section can be found in the above mentioned article. As said in the introduction, these results are limit theorems for uniformly pointed simply generated trees. A \textit{pointed tree} is simply a couple $(T,v)$ with a plane tree (i.e.~rooted and ordered tree) $T$ and a distinguished vertex $v$ of $T$. A uniformly pointed simply generated tree is a couple $(T_n,v_n)$ where $T_n$ is a simply generated tree with $n$ vertices and $v_n$ is a distinguished vertex chosen uniformly at random among the $n$ vertices of $T_n$. Basically, in regime 1 and in regime 2 and 3 with complete condensation, the local tree structure around $v_n$ converges towards an infinite random tree which depends only on the regime. To formally define the notion of convergence used here, one needs to consider $T_n$ to be a subtree of a big ambient tree denoted by $\mathcal U_\infty^\bullet$. Every plane tree (e.g. $T_n$) is considered, by definition, to be rooted, however we will encounter some infinite trees without any root which is unusual in the classic framework of plane trees. Let
\begin{equation*}
\mathcal V_\infty := \left\{\emptyset\right\} \cup \bigcup_{n \geq 1} (\mathbb N^*)^n 
\end{equation*}
be the set of words (empty word included) formed in the alphabet $\mathbb N^* = \{1,2,3,\dots\}$. Usually, plane trees are defined as subtrees of the so-called \textit{Ulam-Haris tree}, denoted here by $\mathcal U_\infty$, which is the tree with vertex set $\mathcal V_\infty$ and edge set $\left\{ (a_1 \dots a_{n-1};a_1 \dots a_n) \,|\, \forall n,a_1,\dots,a_n\in \mathbb N^*\right\}$. With this definition, all the plane trees have a root which is a common ancestor to every vertex of the tree (it is the vertex designated by the empty word $\emptyset$). However, in regime 1, the root of $T_n$ is, in a local point of view, at infinite distance from the distinguished vertex $v_n$. It suggests that the local limit of $T_n$ around $v_n$ has an infinite spine of ancestors and therefore, has no root. This is why we need a more general framework than the usual one for plane trees. Here we explain informally the construction of $\mathcal U_\infty^\bullet$. Let $u_0,u_1,\dots$ be vertices, in the plane, lined up to form an infinite connected spine. Each vertex $u_i$ with $i>0$ gets an infinite countable number of children on the left and on the right of its child $u_{i-1}$. Then, all the leaves of the current tree ($u_0$ included) give birth to the Ulam-Harris tree $\mathcal U_\infty$. The tree we obtain from this construction is denoted by $\mathcal U_\infty^\bullet$ and its set of vertices is denoted by $\mathcal V_\infty^\bullet$ (see Figure \ref{fig arbre ulam harris general}). To formally define this tree one could start by creating the vertex set $\mathcal V_\infty^\bullet$ as a subset of $\mathbb N \times \mathbb Z \times \mathcal V_\infty$ and then describing the edge set. However we think that the above informal construction is enough for our purpose. As we said, we want to represent $T_n$ as a subtree of $\mathcal U_\infty^\bullet$. First we make clear what we call a subtree of $\mathcal U_\infty$ and $\mathcal U_\infty^\bullet$. Denote by $\mathcal E_\infty$ the edge set of $\mathcal U_\infty$ and $\mathcal E_\infty^\bullet$ the edge set of $\mathcal U_\infty^\bullet$.
\begin{figure}
\begin{center}
    \begin{tikzpicture}
    [vertexdot/.style = {draw,circle,fill,inner sep=1.5pt}]
    \node[vertexdot,label=$\emptyset$] (0) at (0,0) {};
    \node[vertexdot,label={[below left]1}] (1) at (-0.8,-1) {};
    \node[vertexdot,label={[below right]2}] (2) at (0,-1) {};
    \node[vertexdot,label={[below right]3}] (3) at (0.8,-1) {};
    \node[vertexdot,label={[below left]31}] (31) at (0.5,-2) {};
    \node[vertexdot,label={[below right]32}] (32) at (1.1,-2) {};
    \node[vertexdot,label={[below left]11}] (11) at (-0.8,-2) {};
    \node[vertexdot,label={[below left]321}] (321) at (1.1,-3) {};
    
    \draw (0)--(1)--(11);
    \draw (0)--(2);
    \draw (0)--(3)--(31);
    \draw (3)--(32)--(321);
    \end{tikzpicture}
    \hspace{2em}
    \begin{tikzpicture}
    [vertexdot/.style = {draw,circle,fill,inner sep=1.5pt}]
    \node[vertexdot,label={[right]$u_0$}] (u0) at (0,0) {};
    \node[vertexdot,label={[right]$u_1$}] (u1) at (0,1.3) {};
    \node[vertexdot,label={[right]$u_2$}] (u2) at (0,2.6) {};
    \node[vertexdot] (u1a) at (-1,0) {};
    \node[vertexdot] (u1b) at (-2,0) {};
    \node[vertexdot] (u1c) at (1,0) {};
    \node[vertexdot] (u1d) at (2,0) {};
    \node[vertexdot] (u2a) at (-1,1.3) {};
    \node[vertexdot] (u2b) at (-2,1.3) {};
    \node[vertexdot] (u2c) at (1,1.3) {};
    \node[vertexdot] (u2d) at (2,1.3) {};
    
    \draw (u0)--(u1)--(u2);
    \draw (0,2.6)--(0,2.8);
    \draw[dashed] (0,2.8)--(0,3.3);
    \draw[dashed] (2.5,1.3)--(3,1.3);
    \draw[dashed] (2.5,0)--(3,0);
    \draw[dashed] (-2.5,1.3)--(-3,1.3);
    \draw[dashed] (-2.5,0)--(-3,0);
    \draw (u1a)--(u1)--(u1b);
    \draw (u1c)--(u1)--(u1d);
    \draw (u2a)--(u2)--(u2b);
    \draw (u2c)--(u2)--(u2d);
    
    \draw (u0) to[out=-45,in=225,distance=1cm] (u0);
    \draw (u1a) to[out=-45,in=225,distance=1cm] (u1a);
    \draw (u1b) to[out=-45,in=225,distance=1cm] (u1b);
    \draw (u1c) to[out=-45,in=225,distance=1cm] (u1c);
    \draw (u1d) to[out=-45,in=225,distance=1cm] (u1d);
    \draw (u2a) to[out=-45,in=225,distance=1cm] (u2a);
    \draw (u2b) to[out=-45,in=225,distance=1cm] (u2b);
    \draw (u2c) to[out=-45,in=225,distance=1cm] (u2c);
    \draw (u2d) to[out=-45,in=225,distance=1cm] (u2d);
    \end{tikzpicture}
\caption{On the left, a subtree of the Ulam-Harris tree $\mathcal U_\infty$. On the right, the tree $\mathcal U_\infty^\bullet$. Each loop represents a copy of the Ulam-Harris tree.}
\label{fig arbre ulam harris general}
\end{center}
\end{figure}
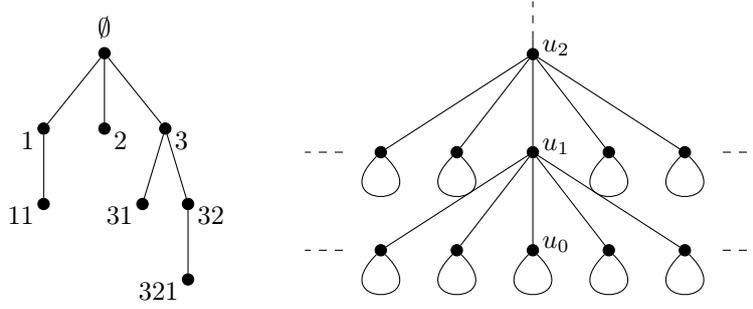
\begin{definition}
\label{defi sous arbre}
A \textit{subtree} $t$ of $\mathcal U_\infty$ is a tree with vertex set included in $\mathcal V_\infty$ and edge set included in $\mathcal E_\infty$, such that the vertex $\emptyset$ belongs to $t$ and such that there is no holes in $t$, meaning that: if $v=(a_1\dots a_n)$ is a vertex of $t$ with $a_n>0$ then $(a_1\dots a_n-1)$ is also a vertex of $t$. Similarly a \textit{subtree} $t$ of $\mathcal U_\infty^\bullet$ is a tree with vertex set included in $\mathcal V_\infty^\bullet$ and edge set included in $\mathcal E_\infty^\bullet$, such that the vertex $u_0$ belongs to $t$ and such that there is no holes in $t$ (see Figure \ref{fig presque fringe subtree}). A subtree of $\mathcal U_\infty^\bullet$ is \textit{rooted} if the set $\{k\geq 0 \,\vert\, u_k \in t\}$ is finite. In this case the vertex $u_k$ with maximal $k$ in $t$ is called the \textit{root} of $t$.
\end{definition}
The representation of $T_n$ as a subtree of $\mathcal U_\infty^\bullet$ will obviously depend on the distinguished vertex $v_n$ since we want to look at the local structure around this vertex. More precisely, let $T$ be a plane tree and $v$ be a vertex of $T$. We identify the distinguished vertex $v$ with the element $u_0$ and the root of $T$ is identified with the element $u_h$ where $h$ is the graph distance between the root and $v$ in $T$ (it is the height of $v$). All the other vertices of $T$ are identified such that the plane order is preserved (See Figure \ref{fig representation arbre plan}). We denote by $(T,v)$ the subtree of $\mathcal U_\infty^\bullet$ obtained this way. We can now formally define the notion of convergence we use.
\begin{figure}
\begin{center}
\begin{tikzpicture}
    [vertexdot/.style = {draw,circle,fill,inner sep=1.5pt}]
    \node[vertexdot,label=root] (0) at (0,0) {};
    \node[vertexdot] (1) at (-0.8,-1.3) {};
    \node[vertexdot] (2) at (0,-1.5) {};
    \node[vertexdot] (3) at (0.8,-1.3) {};
    \node[vertexdot] (21) at (-0.5,-2.5) {};
    \node[vertexdot,label={[right]$v$}] (22) at (0.5,-2.5) {};
    
    \node (a) at (-0.8,-1.7) {$t_1$};
    \node (b) at (0.8,-1.7) {$t_2$};
    \node (c) at (-0.5,-2.9) {$t_3$};
    \node (d) at (0.5,-2.9) {$t_4$};
    
    \draw (0)--(1);
    \draw (0)--(2)--(21);
    \draw (2)--(22);
    \draw (0)--(3);
    
    \draw (1) to[out=-45,in=225,distance=1cm] (1);
    \draw (3) to[out=-45,in=225,distance=1cm] (3);
    \draw (21) to[out=-45,in=225,distance=1cm] (21);
    \draw (22) to[out=-45,in=225,distance=1cm] (22);
    \end{tikzpicture}
    \hspace{2em}
    \begin{tikzpicture}
    [vertexdot/.style = {draw,circle,fill,inner sep=1.5pt},
    vertexcirc/.style = {draw,circle,inner sep=1.5pt}]
    \node[vertexdot,label={[right]$u_0$}] (u0) at (0,0) {};
    \node[vertexdot,label={[right]$u_1$}] (u1) at (0,1.3) {};
    \node[vertexdot,label={[right]$u_2$}] (u2) at (0,2.6) {};
    \node[vertexdot] (u1a) at (-1,0) {};
    \node[vertexcirc,opacity=0.5] (u1b) at (-2,0) {};
    \node[vertexcirc,opacity=0.5] (u1c) at (1,0) {};
    \node[vertexcirc,opacity=0.5] (u1d) at (2,0) {};
    \node[vertexdot] (u2a) at (-1,1.3) {};
    \node[vertexcirc,opacity=0.5] (u2b) at (-2,1.3) {};
    \node[vertexdot] (u2c) at (1,1.3) {};
    \node[vertexcirc,opacity=0.5] (u2d) at (2,1.3) {};
    
    \node (a) at (-1,0.9) {$t_1$};
    \node (b) at (1,0.9) {$t_2$};
    \node (c) at (-1,-0.4) {$t_3$};
    \node (d) at (0,-0.4) {$t_4$};

    \draw[line width=0.5mm] (u0)--(u1)--(u2);
    \draw[opacity=0.5] (0,2.6)--(0,2.8);
    \draw[dashed,opacity=0.5] (0,2.8)--(0,3.3);
    \draw[dashed,opacity=0.5] (2.5,1.3)--(3,1.3);
    \draw[dashed,opacity=0.5] (2.5,0)--(3,0);
    \draw[dashed,opacity=0.5] (-2.5,1.3)--(-3,1.3);
    \draw[dashed,opacity=0.5] (-2.5,0)--(-3,0);
    \draw[line width=0.5mm] (u1a)--(u1);
    \draw[opacity=0.5] (u1b)--(u1);
    \draw[opacity=0.5] (u1c)--(u1)--(u1d);
    \draw[line width=0.5mm] (u2a)--(u2);
    \draw[opacity=0.5] (u2b)--(u2);
    \draw[line width=0.5mm] (u2c)--(u2);
    \draw[opacity=0.5] (u2d)--(u2);
    
    \draw[line width=0.5mm] (u0) to[out=-45,in=225,distance=1cm] (u0);
    \draw[line width=0.5mm] (u1a) to[out=-45,in=225,distance=1cm] (u1a);
    \draw[opacity=0.5] (u1b) to[out=-45,in=225,distance=1cm] (u1b);
    \draw[opacity=0.5] (u1c) to[out=-45,in=225,distance=1cm] (u1c);
    \draw[opacity=0.5] (u1d) to[out=-45,in=225,distance=1cm] (u1d);
    \draw[line width=0.5mm] (u2a) to[out=-45,in=225,distance=1cm] (u2a);
    \draw[opacity=0.5] (u2b) to[out=-45,in=225,distance=1cm] (u2b);
    \draw[line width=0.5mm] (u2c) to[out=-45,in=225,distance=1cm] (u2c);
    \draw[opacity=0.5] (u2d) to[out=-45,in=225,distance=1cm] (u2d);
    \end{tikzpicture}
\caption{On the left, a finite tree $T$ with a distinguished vertex $v$ at distance 2 from the root. The loops $t_1,t_2,t_3$ and $t_4$ represent subtrees of $T$ which can be seen as subtrees of the Ulam-Harris tree $\mathcal U_\infty$. On the right, the representation of $T$ as a subtree of $\mathcal U_\infty^\bullet$.}
\label{fig representation arbre plan}   
\end{center}
\end{figure}
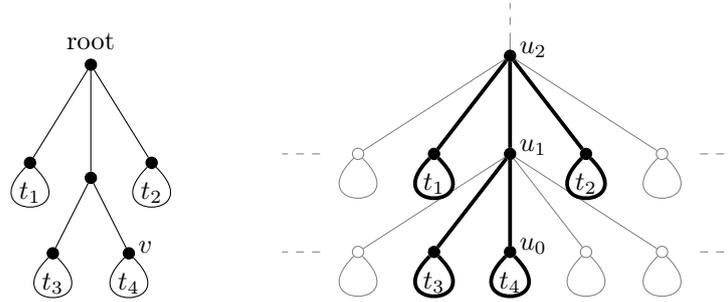
\begin{definition}
\label{defi conv}
Let $t_n$ and $t$ be subtrees of $\mathcal U_\infty^\bullet$ for all $n$. We say that $(t_n)$ converges towards $t$ and write $t_n \to t$ if for all $v \in \mathcal V_\infty^\bullet$,
\begin{equation*}
\mathds{1}_{v \in t_n} \xrightarrow[n \to \infty]{} \mathds{1}_{v \in t}.
\end{equation*}
\end{definition}
This notion of convergence induces a topology that is metrizable and compact over the set of subtrees of $\mathcal U^\bullet_\infty$. Before stating the limit theorems, one needs to define the limiting trees $T_1^*$, $T_2^*$ and $T_3^*$ (seen as subtrees of $\mathcal U^\bullet_\infty$) that correspond, respectively, to regime 1, 2 and 3. Let $T$ be a BGW tree with reproduction law $\pi$, given by (\ref{eq proba equivalente}), in regime 1 or 2. Let $\hat{\pi}$ be the probability measure on $\mathbb N \cup \{\infty\}$ given by
\begin{equation*}
\hat{\pi}_k = k\pi_k ~~ \forall k\in \mathbb N ~~\text{ and }~~ \hat{\pi}_\infty=1-m
\end{equation*}
where $m :=\min(1,\nu)$ is the mean of $\pi$. 

\begin{itemize}
    \item First we define $T_1^*$ in regime 1. Notice that in this case $\hat \pi_\infty=0$. We attach to $u_0$ an independent copy of $T$. For $k \geq 1$, $u_k$ receives offspring according to an independent copy of $\hat \pi$. Then $u_{k-1}$ is identified with a child of $u_k$ chosen uniformly at random. Finally, we attach an independent copy of $T$ to all the children of $u_k$, except $u_{k-1}$ (see Figure \ref{fig arbres limites}). 
    \item Now we define $T_2^*$ in regime 2. In this case $\hat \pi _\infty >0$. We attach to $u_0$ an independent copy of $T$. For $k \geq 1$, $u_k$ receives offspring according to an independent copy of $\hat \pi$. Notice that almost surely, there exist $1 \leq i < j$ two integers such that $u_1,\dots,u_{i-1},u_{i+1},\dots,u_{j-1}$ have a finite number of children and $u_i$ and $u_j$ have an infinite number of children. For every $k \in \{1,\dots,i-1,i+1,\dots,j-1\}$, $u_{k-1}$ is identified with a child of $u_k$ chosen uniformly at random, while $u_i$ gets infinitely many children on the left and the right of its child $u_{i-1}$. Finally, for all $k \geq 1$, we attach an independent copy of $T$ to all the children of $u_k$, except $u_{k-1}$. The tree $T_2^*$ is the tree obtained by keeping all the descendants of $u_{j-1}$ (see Figure \ref{fig arbres limites}). 
    \item Finally, $T_3^*$ is simply composed of the vertex $u_1$ having infinitely many children on the left and on the right of $u_0$, all of them, including $u_0$, being leaves (see Figure \ref{fig arbres limites}).
\end{itemize}

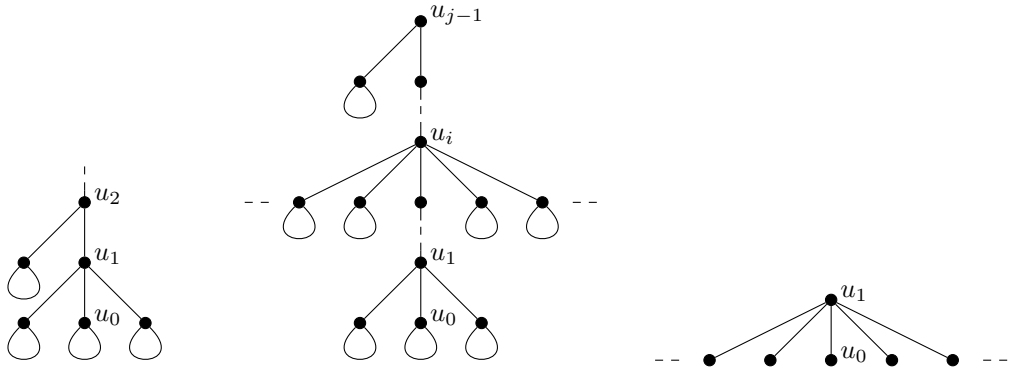
\begin{figure}
\begin{center}
    \begin{tikzpicture}
    [vertexdot/.style = {draw,circle,fill,inner sep=1.5pt},
    scale=0.8]
    \node[vertexdot,label={[right]$u_0$}] (u0) at (0,0) {};
    \node[vertexdot,label={[right]$u_1$}] (u1) at (0,1) {};
    \node[vertexdot,label={[right]$u_2$}] (u2) at (0,2) {};
    \node[vertexdot] (u1a) at (-1,0) {};
    \node[vertexdot] (u1c) at (1,0) {};
    \node[vertexdot] (u2a) at (-1,1) {};
    
    \draw (u0)--(u1)--(u2);
    \draw (0,2)--(0,2.2);
    \draw[dashed] (0,2.2)--(0,2.7);
    \draw (u1a)--(u1);
    \draw (u1c)--(u1);
    \draw (u2a)--(u2);
    
    \draw (u0) to[out=-45,in=225,distance=1cm] (u0);
    \draw (u1a) to[out=-45,in=225,distance=1cm] (u1a);
    \draw (u1c) to[out=-45,in=225,distance=1cm] (u1c);
    \draw (u2a) to[out=-45,in=225,distance=1cm] (u2a);
    \end{tikzpicture}
    \hspace{1em}
    \begin{tikzpicture}
    [vertexdot/.style = {draw,circle,fill,inner sep=1.5pt},
    scale=0.8]
    \node[vertexdot,label={[right]$u_0$}] (u0) at (0,0) {};
    \node[vertexdot,label={[right]$u_1$}] (u1) at (0,1) {};
    \node[vertexdot,label={[right]$u_i$}] (ui) at (0,3) {};
    \node[vertexdot] (ui-1) at (0,2) {};
    \node[vertexdot] (uj-2) at (0,4) {};
    \node[vertexdot,label={[right]$u_{j-1}$}] (uj-1) at (0,5) {};
    \node[vertexdot] (u1a) at (-1,0) {};
    \node[vertexdot] (u1c) at (1,0) {};
    \node[vertexdot] (uia) at (-1,2) {};
    \node[vertexdot] (uib) at (-2,2) {};
    \node[vertexdot] (uic) at (1,2) {};
    \node[vertexdot] (uid) at (2,2) {};
    \node[vertexdot] (uj-1a) at (-1,4) {};
    
    \draw (u0)--(u1);
    \draw (0,1)--(0,1.2);
    \draw[dashed] (0,1.2)--(0,1.7);
    \draw (0,1.7)--(0,2);
    \draw (0,3)--(0,3.2);
    \draw[dashed] (0,3.2)--(0,3.7);
    \draw (0,3.7)--(0,4);
    \draw[dashed] (2.5,2)--(3,2);
    \draw[dashed] (-2.5,2)--(-3,2);
    \draw (u1a)--(u1);
    \draw (u1c)--(u1);
    \draw (uia)--(ui)--(uib);
    \draw (uic)--(ui)--(uid);
    \draw (uj-1)--(uj-2);
    \draw (ui-1)--(ui);
    \draw (uj-1)--(uj-1a);
    
    \draw (u0) to[out=-45,in=225,distance=1cm] (u0);
    \draw (u1a) to[out=-45,in=225,distance=1cm] (u1a);
    \draw (u1c) to[out=-45,in=225,distance=1cm] (u1c);
    \draw (uia) to[out=-45,in=225,distance=1cm] (uia);
    \draw (uib) to[out=-45,in=225,distance=1cm] (uib);
    \draw (uic) to[out=-45,in=225,distance=1cm] (uic);
    \draw (uid) to[out=-45,in=225,distance=1cm] (uid);
    \draw (uj-1a) to[out=-45,in=225,distance=1cm] (uj-1a);
    \end{tikzpicture}
    \hspace{1em}
    \begin{tikzpicture}
    [vertexdot/.style = {draw,circle,fill,inner sep=1.5pt},
    scale=0.8]
    \node[vertexdot,label={[right]$u_1$}] (u1) at (0,1) {};
    \node[vertexdot,label={[right]$u_0$}] (u0) at (0,0) {};
    \node[vertexdot] (u1a) at (-1,0) {};
    \node[vertexdot] (u1b) at (-2,0) {};
    \node[vertexdot] (u1c) at (1,0) {};
    \node[vertexdot] (u1d) at (2,0) {};
    
    \draw (u1a)--(u1)--(u1b);
    \draw (u1c)--(u1)--(u1d);
    \draw (u0)--(u1);
    \draw[dashed] (2.5,0)--(3,0);
    \draw[dashed] (-2.5,0)--(-3,0);
    
    \end{tikzpicture}
\caption{Representation of the trees $T_1^*$, $T_2^*$ and $T_3^*$, respectively, from left to right. Each loop represents a copy of a Bienaymé-Galton-Watson tree of reproduction law $\pi$.}
\label{fig arbres limites}
\end{center}
\end{figure}

\begin{theorem}[Stufler 2018]
\label{limit thm}
Let $(T_n,v_n)$ be a uniformly pointed simply generated tree with $n$ vertices. Suppose that we are in regime $i=1$ or in regime $i \in \{2,3\}$ with complete condensation (meaning that (\ref{eq condensation}) is satisfied). Then the convergence
\begin{equation*}
(T_n,v_n) \xrightarrow[n\to \infty]{(d)} T_i^*
\end{equation*}
holds in distribution for the topology induces by the convergence of Definition \ref{defi conv}. 
\end{theorem}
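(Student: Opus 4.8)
\section*{Proof sketch}

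The plan is to reduce, in regimes $1$ and $2$, to a conditioned Bienaymé-Galton-Watson tree and to evaluate finite-dimensional (cylinder) probabilities explicitly; regime $3$, where no such BGW description exists, is treated separately at the end. By the identity in law recalled in Section \ref{sec simply gen trees}, in regimes $1$ and $2$ the tree $T_n$ has the law of a $\pi$-BGW tree $\mathcal T$ conditioned on $\{|\mathcal T|=n\}$, so that $\proba{T_n=T}=\proba{\mathcal T=T}/\proba{|\mathcal T|=n}$ with $\proba{\mathcal T=T}=\prod_{u\in T}\pi_{k_u}$. The topology of Definition \ref{defi conv} makes the set of subtrees a compact metrizable space (a closed subset of $\{0,1\}^{\mathcal V_\infty^\bullet}$ for the product topology), so convergence in distribution is equivalent to convergence of finite-dimensional laws: it suffices to prove, for every finite admissible pattern $t$, that $\proba{(T_n,v_n)\text{ matches }t}\to\proba{T_i^*\text{ matches }t}$, where a pattern prescribes the indicators $\ind{v\in \cdot}$ on a finite downward-closed subset of $\mathcal V_\infty^\bullet$. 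Compactness then upgrades this to full convergence in law, and the whole problem becomes the asymptotic evaluation of finitely many such probabilities.

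The main device is the spinal decomposition of the pointed tree. Writing $v_n=s_0,s_1,\dots,s_h$ for the ancestral line from $v_n$ up to the root, the pair $(T_n,v_n)$ is encoded by the outdegrees $(k_{s_j})_{0\le j\le h}$, the rank of $s_{j-1}$ among the children of $s_j$, the subtrees hanging off the spine (for each $j\ge1$, the subtrees rooted at the children of $s_j$ distinct from $s_{j-1}$), and the subtree rooted at $v_n$. Since $v_n$ is uniform, $\proba{(T_n,v_n)=(T,v)}=\frac1n\proba{T_n=T}$, and the weight $\prod_{u\in T}\pi_{k_u}$ factorizes along this decomposition. A finite pattern $t$ pins down finitely many spine degrees, the corresponding downward ranks, and the shapes of finitely many hanging subtrees near $u_0$, leaving free the continuation of the spine and the sizes of the remaining hanging forest. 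Summing over the latter, the cylinder probability becomes a finite sum of terms of the shape $\frac1n\,W(t)\,\proba{|\mathcal F_d|=n-\ell}/\proba{|\mathcal T|=n}$, where $W(t)$ is a product of $\pi$-weights, $\ell$ is the number of vertices fixed by $t$, and $\mathcal F_d$ is a $\pi$-BGW forest with $d$ free roots.

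The analytic heart is the behaviour of the partition-function ratio. In regime $1$ the law $\pi$ is critical and the local limit theorem for its total progeny gives $\proba{|\mathcal T|=n}\sim c\,n^{-3/2}$ and, more generally, $\proba{|\mathcal F_d|=n-\ell}\sim c_d\,n^{-3/2}$; combined with the factor $\frac1n$ and the sum over the free spine length, this yields a finite limit for each cylinder probability. In the limit a fixed spine degree $k$ receives weight proportional to $k\pi_k=\hat\pi_k$ --- the size-biasing being precisely the statement that a vertex is an ancestor of the uniform $v_n$ with probability proportional to the size of its subtree --- while the downward child becomes uniform. Since $\hat\pi_k=k\pi_k$ together with a uniform choice among the $k$ children restores the weight $\pi_k$, the local weights along the spine agree with the BGW weights, and the hanging subtrees, as well as the subtree below $v_n$, converge to independent copies of $\mathcal T$; this is exactly the description of $T_1^*$. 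In regime $2$ the same factorization holds, but now $m=\nu<1$ so $\hat\pi_\infty=1-m>0$: the critical local limit theorem is replaced by the condensation analysis, under which a single spine vertex acquires a macroscopic number of children while every other spine degree stays finite and size-biased. Cutting the spine just below its second vertex of infinite degree then reproduces the rooted tree $T_2^*$.

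The step I expect to be the main obstacle is the uniform control of the free forest sizes in the condensation regime. One must show that, once the unique giant-degree spine vertex predicted by hypothesis (\ref{eq condensation}) is isolated, the remaining $\approx mn$ vertices split among the finitely many constrained local subtrees and the free remainder exactly as in a critical conditioned forest, so that the error term $\mathcal E_n$ in (\ref{eq condensation}) does not leak into the local picture; this calls for sharp one-big-jump and local-limit estimates for the subcritical, typically heavy-tailed, law $\pi$ rather than the clean $n^{-3/2}$ analysis of regime $1$. Finally, regime $3$ is handled directly, with no BGW structure: here $m=0$, so complete condensation forces $\Delta(T_n)=n-n\mathcal E_n=n-o(n)$, whence with probability tending to one $T_n$ consists of a single vertex of degree $\sim n$ carrying $\sim n$ leaves. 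A uniform $v_n$ is then a leaf whose parent collects all the remaining mass, and the indicators of $(T_n,v_n)$ on any finite set converge to those of $T_3^*$, which completes the proof.
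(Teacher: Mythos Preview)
The paper does not prove this theorem. It is stated as a result of Stufler (2018) and the surrounding section explicitly says ``All the proofs and details of this section can be found in the above mentioned article''; no argument, not even a sketch, is given here. So there is no proof in the paper against which to compare your proposal.

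That said, your outline is broadly the right shape for how such results are established: reduce to cylinder events via compactness of the space of subtrees, use the spinal decomposition of a uniformly pointed conditioned BGW tree, and identify the limiting spine law through size-biasing and local-limit or one-big-jump estimates. Two cautions if you pursue this further. First, in regime~2 the complete-condensation hypothesis~(\ref{eq condensation}) alone is what the paper assumes, not a specific tail form for $\pi$, so your ``sharp one-big-jump and local-limit estimates'' cannot rely on regular variation; Stufler's argument works under the bare maximum-degree hypothesis and you should check what exactly is needed. Second, your regime~3 paragraph is too quick: knowing $\Delta(T_n)=n-o(n)$ does not by itself force the remaining $o(n)$ vertices to be leaves attached to the hub, and one still has to argue that a uniform $v_n$ is, with probability tending to~$1$, a child of the hub rather than, say, a grandchild; this requires a little more than what you wrote.
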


\section{Properties of the tricolouration}
\label{sec prop tricolour}

In this section, we look at some general properties of the tricolouration defined in the introduction that will be essential to prove our main result. Let $T_1,\dots,T_n$ be $n$ rooted finite trees. We define $T_1*\cdots *T_n$ the rooted tree obtained by creating an edge between each root of $T_1,\dots,T_n$ and a new vertex which will be the root of $T_1*\cdots *T_n$. In particular the number of vertices $\# V(T_1*\cdots * T_n)$ equals $1+\# V(T_1) + \cdots + \# V(T_n)$. And the number of edges $\# E(T_1*\cdots * T_n)$ equals $n+\# E(T_1) + \cdots + \# E(T_n)$. We say that a rooted tree has colour $c$ if the root has colour $c$.

\begin{lemma}
\label{lemme jointures}
Let $T_1,\dots,T_n$ be $n$ rooted finite trees. Set $T := T_1*\cdots *T_n$, then
\begin{enumerate}
    \item $T$ is red if $T_1,\dots,T_n$ are all non-red.
    \item $T$ is orange if exactly one tree among $T_1,\dots,T_n$ is red.
    \item $T$ is green if two or more trees among $T_1,\dots,T_n$ are red.
\end{enumerate}
\end{lemma}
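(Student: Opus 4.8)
The plan is to recast the whole statement in terms of maximum independent sets, where the local structure of the join $T=T_1*\cdots*T_n$ becomes transparent. As recalled in the introduction, the complement of a smallest covering is a maximum independent set, so a vertex $v$ lies in every smallest covering exactly when it lies in no maximum independent set, and in no smallest covering exactly when it lies in every maximum independent set. Consequently $v$ is green iff it belongs to no maximum independent set, red iff it belongs to every maximum independent set, and orange iff it belongs to some but not all of them. This is the dictionary I would use throughout.

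For a finite rooted tree $U$ with root $\rho$, let $I^+(U)$ (resp.\ $I^-(U)$) denote the maximal cardinality of an independent set of $U$ that contains $\rho$ (resp.\ avoids $\rho$), so that $I(U)=\max(I^+(U),I^-(U))$. Deleting $\rho$ from an independent set that contains it shows $I^+(U)\le I^-(U)+1$. The colour of $\rho$ is then read off the comparison of these two numbers: if $I^+(U)>I^-(U)$ every maximum independent set must contain $\rho$, so $\rho$ is red (and then necessarily $I^+(U)=I^-(U)+1$); if $I^+(U)<I^-(U)$ no maximum independent set contains $\rho$, so $\rho$ is green; and if $I^+(U)=I^-(U)$ there are maximum independent sets of both types, so $\rho$ is orange. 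In particular $\rho$ is red iff $I^+(U)-I^-(U)=1$, and non-red iff $I^+(U)-I^-(U)\le 0$.

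Next I would compute these two quantities for $T=T_1*\cdots*T_n$ with new root $r$ and $r_i$ the root of $T_i$. An independent set of $T$ containing $r$ must avoid every $r_i$, so its trace on each $T_i$ is an independent set avoiding $r_i$; conversely such traces can be chosen independently, whence $I^+(T)=1+\sum_{i=1}^n I^-(T_i)$. An independent set avoiding $r$ imposes no constraint across the subtrees, so $I^-(T)=\sum_{i=1}^n I(T_i)=\sum_{i=1}^n\max(I^+(T_i),I^-(T_i))$. Subtracting and using the fact that $\max(I^+(T_i)-I^-(T_i),0)$ equals $1$ when $T_i$ is red and $0$ otherwise (by the dictionary of the previous paragraph together with $I^+(T_i)\le I^-(T_i)+1$), I get
\begin{equation*}
I^+(T)-I^-(T)=1-\sum_{i=1}^n\max\bigl(I^+(T_i)-I^-(T_i),0\bigr)=1-k,
\end{equation*}
where $k$ is the number of red trees among $T_1,\dots,T_n$.

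Finally I would read off the colour of $r$ from the sign of $1-k$: when $k=0$ the difference is $1>0$, so $r$ is red, which is case (1); when $k=1$ it equals $0$, so $r$ is orange, which is case (2); and when $k\ge 2$ it is $\le-1<0$, so $r$ is green, which is case (3). The only genuinely delicate point is the reduction step of the second paragraph, namely verifying that the colour of a root is governed solely by the sign of $I^+-I^-$ together with the bound $I^+\le I^-+1$; once this local dictionary is established, the evaluation of $I^\pm(T)$ for a join is immediate, since conditioning on whether $r$ belongs to the independent set decouples the subtrees and turns the problem into the elementary arithmetic above.
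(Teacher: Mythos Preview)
Your argument is correct. The key dictionary---$\rho$ is red/orange/green according as $I^+(U)-I^-(U)$ is $1$, $0$, or $\le -1$---is justified exactly as you say, and once it is in place the computation $I^+(T)-I^-(T)=1-k$ is immediate and decides all three cases at once.

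The paper argues on the dual side, working directly with smallest coverings rather than maximum independent sets. It first observes that $C(T)=\sum_i C(T_i)+\Delta$ with $\Delta\in\{0,1\}$, and that $\Delta=0$ precisely when every $T_i$ is non-red. It then treats the three cases separately by exhibiting, in each case, explicit smallest coverings of $T$ (with or without the new root) built from smallest coverings of the $T_i$, and arguing that in cases (1) and (3) \emph{all} smallest coverings are obtained this way. Your route trades these constructions for a single arithmetic identity: the advantage is that no case analysis appears in the proof proper, and the inequality $I^+\le I^-+1$ replaces the somewhat implicit claim in the paper that ``all smallest coverings are obtained this way.'' Conversely, the paper's approach keeps the whole discussion in the covering language used elsewhere in the article and avoids introducing the auxiliary quantities $I^\pm$. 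Both proofs are short; yours is perhaps more uniform, the paper's more self-contained with respect to the surrounding definitions.
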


\begin{proof}
Denote by $C(T)$ the size of a smallest covering of $T$. Notice that $C(T)$ is equal to $C(T_1)+\cdots+C(T_n) + \Delta$ with $\Delta \in \{0,1\}$. More precisely, $C(T)=C(T_1)+\cdots+C(T_n)$ if and only if all the $T_i$'s are non-red. 
\begin{enumerate}
    \item Suppose that $T_1,\dots,T_n$ are all non-red. We can take a smallest covering for each $T_i$ such that the root of $T_i$ is included in the covering. Then the union of these coverings gives a smallest covering of $T$. Moreover we can see that all the smallest coverings of $T$ are obtained this way. Thus $T$ is red.
    \item Suppose than $T_1$ is red and $T_2,\dots,T_n$ are all non-red. We can take a smallest covering for each $T_i$ in addition to the root of $T$. This gives a smallest covering of $T$, so $T$ is either green or orange. We can also take a smallest covering for each $T_i$, $i>1$, such that the root of $T_i$ is included in the covering, a smallest covering of $T_1$ and the root of $T_1$. This also gives a smallest covering of $T$. Thus $T$ is orange.
    \item Suppose that $T_1$ and $T_2$ are red. As for the previous case, we can take a smallest covering for each $T_i$ in addition to the root of $T$. This gives a smallest covering for $T$. But, as opposed to the previous case, all the smallest coverings of $T$ are obtained this way. Thus $T$ is green.
\end{enumerate} 
\end{proof}

If $T_1,T_2$ are finite trees and $v_1,v_2$ are vertices of, respectively, $T_1$ and $T_2$, then we denote by $(T_1,v_1)*(T_2,v_2)$ the tree obtained from $T_1$ and $T_2$ by drawing an edge between $v_1$ and $v_2$.

\begin{lemma}
\label{lemme jointure}
With the same notation as above, set $T := (T_1,v_1)*(T_2,v_2)$. If $v_1$ is green in the tricolouration of $T_1$ then, the colour of every vertex $v$ in the tricouloration of $T$ is just the same as its colour in the tricolouration of $T_1$ (if $v$ is a vertex of $T_1$) or $T_2$ (if $v$ is a vertex of $T_2$). In other words, the tricoloured tree $T$ is simply obtained by drawing an edge between $v_1$ and $v_2$ and keeping the colours of $T_1$ and $T_2$.
\end{lemma}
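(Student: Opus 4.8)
The plan is to understand the smallest coverings of $T$ in terms of those of $T_1$ and $T_2$, and to show that the green hypothesis on $v_1$ forces them to decompose as a product. Write $C(\,\cdot\,)$ for the size of a smallest covering. Recall that a vertex's colour is determined solely by whether it lies in every, in some, or in no smallest covering. Thus if I can prove that the smallest coverings of $T$ are exactly the sets $S_1 \cup S_2$, where $S_1$ ranges over smallest coverings of $T_1$ and $S_2$ over smallest coverings of $T_2$, then the membership of any vertex across the smallest coverings of $T$ will mirror exactly its membership across the smallest coverings of whichever component contains it, and all colours will be preserved.

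First I would compute $C(T)$. Since $V(T_1)$ and $V(T_2)$ are disjoint and any covering of $T$ restricts to coverings of $T_1$ and of $T_2$, every covering of $T$ has size at least $C(T_1)+C(T_2)$. Conversely, choosing a smallest covering $S_1$ of $T_1$ — which contains $v_1$ because $v_1$ is green — together with any smallest covering $S_2$ of $T_2$, the union $S_1\cup S_2$ covers every edge of $T_1$, every edge of $T_2$, and the bridging edge $\{v_1,v_2\}$ (this last one precisely because $v_1\in S_1$). Hence $C(T)=C(T_1)+C(T_2)$, an identity that already uses the green hypothesis in an essential way.

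Next I would characterize the smallest coverings themselves. Let $S$ be a smallest covering of $T$. Its restrictions $S\cap V(T_1)$ and $S\cap V(T_2)$ are coverings of $T_1$ and $T_2$, so have sizes at least $C(T_1)$ and $C(T_2)$; since these sizes sum to $|S|=C(T)=C(T_1)+C(T_2)$, both restrictions must in fact be smallest coverings of their respective trees. In particular $S\cap V(T_1)$ is a smallest covering of $T_1$, hence contains the green vertex $v_1$. Conversely, for any smallest coverings $S_1,S_2$ of $T_1,T_2$, the union $S_1\cup S_2$ is a covering of $T$ of minimal size (the bridging edge being covered by $v_1\in S_1$), hence a smallest covering of $T$. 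This establishes the desired product description of the smallest coverings of $T$.

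From this product structure the conclusion is immediate: a vertex $v\in V(T_1)$ lies in a smallest covering $S$ of $T$ if and only if it lies in the smallest covering $S\cap V(T_1)$ of $T_1$, and every smallest covering of $T_1$ arises this way (by pairing it with an arbitrary smallest covering of $T_2$); therefore $v$ lies in all, in some, or in no smallest coverings of $T$ exactly when the same holds in $T_1$, so its colour is unchanged, and symmetrically for $v\in V(T_2)$. The one point that genuinely requires the green hypothesis — and hence the main obstacle to keep in view — is the decoupling of the two sides: it is precisely because $v_1$ is forced into every smallest covering of $T_1$ that the bridging edge is automatically covered from the $T_1$ side, so no constraint is transmitted to $v_2$ and the smallest coverings factor as a product. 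Without greenness this factorization breaks down and the colours near $v_1$ and $v_2$ can genuinely change.
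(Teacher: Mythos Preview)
Your proof is correct and follows the same approach as the paper's: both arguments rest on showing that the smallest coverings of $T$ are exactly the unions $S_1\cup S_2$ of smallest coverings of the factors, with the green hypothesis on $v_1$ ensuring the bridge edge is automatically covered. The paper's proof is a two-sentence sketch of this idea, while you have carefully filled in the details (the computation $C(T)=C(T_1)+C(T_2)$, the bijection between smallest coverings, and the resulting preservation of colours).
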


\begin{proof}
Notice that a smallest covering of $T_1$ combined with a smallest covering a $T_2$ gives a smallest covering of $T$. Conversely a smallest covering of $T$ is necessarily obtained by combining a smallest covering of $T_1$ and $T_2$.
\end{proof}

\section{Tricolouration of the infinite limiting trees}
\label{sec tricoloration arbre infini}

In this section, we extend the definition of the tricolouration given in the introduction to the random infinite limiting trees $T_1^*$, $T_2^*$ and $T_3^*$ defined in Section \ref{sec limit thm}. The initial definition applies only to finite trees since a covering of smallest size only makes sense in this context. Even though it seems not obvious to find a satisfactory definition of "smallest covering" for an infinite tree, it is still possible to describe a canonical way to tricolour the trees $T_1^*$, $T_2^*$ and $T_3^*$ using the properties found in Section \ref{sec prop tricolour}. 

Let $t$ be a subtree of $\mathcal U_\infty^\bullet$ (finite or not) such that for every $k \geq 0$ and for every child $v$ of $u_k$, distinct from $u_{k-1}$, $v$ has a finite number of descendants. In other words, for all $k\geq 0$, all the children of $u_k$, distinct from $u_{k-1}$, are roots of finite trees. Since those trees are finite, it makes sense to consider their tricolouration. A \textit{good} vertex of $t$ is a vertex $u_k$ with $k\geq 0$ such that, at least two of its children, distinct from $u_{k-1}$, are red in the tricolouration of the finite subtree they produce. If $t$ is finite, then, from Lemma \ref{lemme jointures}, a good vertex is a green vertex for the tricolouration of $t$. Notice that $T_1^*$, $T_2^*$ and $T_3^*$ satisfy the same hypothesis as $t$ almost surely.
\begin{itemize}
    \item We begin with the definition of the tricolouration of $T_1^*$. Almost surely, there exists an increasing sequence $(k_i)_i$ such that for all $i$, $u_{k_i}$ is good. For all $i$, all the vertices below $u_{k_i}$ ($u_{k_i}$ included) get the same colour in $T_1^*$ as their colour in the tricolouration of the subtree rooted at $u_{k_i}$ (which is a finite tree). Notice that, from Lemma \ref{lemme jointures}, $u_{k_i}$ gets necessarily the colour green. Lemma \ref{lemme jointure} ensures that this way of colouring is consistent when taking larger $i$.
    \item For $T_2^*$ we colour the unique vertex with infinite degree in green. Then, by cutting this vertex from $T_2^*$ we obtain a (infinite) forest of finite trees who gets their induced tricolouration. Notice that, almost surely, the vertex with infinite degree is good.
    \item Lastly, all the leaves of $T_3^*$ are coloured in red and the root $u_1$ is coloured in green.
\end{itemize}

We finish this section with the following lemma which explicitly gives the colour distribution of the vertex $u_0$ in $T_i^*$. This lemma will be useful when proving Theorem \ref{main thm}.

\begin{lemma}
\label{lemme calcul pic}
Let $p_i(c)$ be the probability that $u_0$ has colour $c$ in $T_i^*$.
\begin{enumerate}
\item In regime $i=1$ and $i=2$ with complete condensation we have
\begin{equation*}
 p_i(\text{green})=\frac{1-q+(1-2q)G'(1-q)}{1+G'(1-q)}, ~~~~~~~~ p_i(\text{orange})= \frac{2\,q\,G'(1-q)}{1+G'(1-q)},  
\end{equation*}
\begin{equation*}
    p_i(\text{red})= \frac{q}{1+G'(1-q)}.
\end{equation*}
\item In regime 3 with complete condensation we have that $p_3(\text{red})=1$.
\end{enumerate}
\end{lemma}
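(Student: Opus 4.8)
The plan is to reduce the computation of $p_i(c)$ to the \emph{local} colour of $u_0$, which by construction is determined inside a finite subtree, and then to apply Lemma \ref{lemme jointures} treating $u_0$ as a branch point. I would first record the elementary input: a Bienaymé–Galton–Watson tree $T$ with offspring law $\pi$ has a red root with probability exactly $q$. Indeed, by Lemma \ref{lemme jointures} the root is red if and only if all of its children subtrees are non-red, giving the fixed-point equation $\mathbb{P}(T\text{ red})=G\bigl(1-\mathbb{P}(T\text{ red})\bigr)$, whose solution in $[0,1]$ is $q$. Counting red children one step further, Lemma \ref{lemme jointures} then shows the root of $T$ is orange (exactly one red child) with probability $qG'(1-q)$ and green (at least two red children) with probability $1-q-qG'(1-q)$, since $\mathbb{P}(T\text{ red})=G(1-q)=q$, $\mathbb{E}[D(1-q)^{D-1}]=G'(1-q)$ for $D\sim\pi$.

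Next I treat regimes $1$ and $2$ together. By the definition of the tricolouration of $T_1^*$ and $T_2^*$, the colour of $u_0$ coincides with its colour in the finite subtree $S$ lying below the first good spine vertex (regime $1$) or below the unique infinite-degree vertex (regime $2$); in both cases that vertex is green, so Lemma \ref{lemme jointure} guarantees that truncating there leaves the colour of $u_0$ unchanged. Inside the finite tree $S$, removing $u_0$ splits $S$ into the subtrees hanging from the children of $u_0$, which are independent copies of $T$ and hence red with probability $q$ each, together with one distinguished subtree $S_{\mathrm{up}}$ rooted at the parent $u_1$. Writing $S=S_{c_1}*\cdots*S_{c_{k_0}}*S_{\mathrm{up}}$ with $u_0$ as the new root, Lemma \ref{lemme jointures} shows that the colour of $u_0$ is dictated solely by the total number of red subtrees among these, namely $R_{\mathrm{down}}+\mathbf{1}\{S_{\mathrm{up}}\text{ red}\}$, where $R_{\mathrm{down}}$ is the number of red children subtrees.

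The heart of the argument is computing $\rho:=\mathbb{P}(S_{\mathrm{up}}\text{ red})$, the probability that $u_1$ is red in its upward subtree. The upward subtree rooted at any spine vertex $u_k$ consists of $\hat D_k-1$ independent finite copies of $T$ (with $\hat D_k\sim\hat\pi$) together with the upward subtree of $u_{k+1}$; these pieces are independent and, by stationarity of the spine, the upward subtree has the same law at every $u_k$, so Lemma \ref{lemme jointures} yields the one-step relation $\rho=\mathbb{E}[(1-q)^{\hat D-1}](1-\rho)$. A key simplification is that infinite-degree spine vertices are automatically non-red (they have infinitely many red children almost surely), so the mass $\hat\pi_\infty$ contributes $(1-q)^\infty=0$ and one is left with $\mathbb{E}[(1-q)^{\hat D-1}]=G'(1-q)$ in both regimes; the linear equation $\rho=G'(1-q)(1-\rho)$ then has the unique solution $\rho=G'(1-q)/\bigl(1+G'(1-q)\bigr)$. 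I expect this step to be the main obstacle: one must justify that the upward redness is well defined on the infinite spine and genuinely obeys the stationary recursion, using the almost sure existence of good (regime $1$) or infinite-degree (regime $2$) vertices to terminate it. In regime $2$ the cleanest rigorous route is to condition on the position of the first infinite-degree vertex, whose prefix of finite-degree spine vertices is geometric with size-biased conditioned-finite degrees, and to sum the resulting geometric series; I have checked that this yields the same value $G'(1-q)/\bigl(1+G'(1-q)\bigr)$, the infinite-degree vertex being non-red so that truncating just below it does not change $u_1$'s redness.

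Finally I would combine the two ingredients. Since $R_{\mathrm{down}}$ and the event $\{S_{\mathrm{up}}\text{ red}\}$ are independent, with $\mathbb{P}(R_{\mathrm{down}}=0)=q$, $\mathbb{P}(R_{\mathrm{down}}=1)=qG'(1-q)$ and $\mathbb{P}(S_{\mathrm{up}}\text{ red})=\rho$, the vertex $u_0$ is red iff $R_{\mathrm{down}}=0$ and $S_{\mathrm{up}}$ is non-red, orange iff the total equals $1$, and green otherwise. This gives $p_i(\mathrm{red})=q(1-\rho)=q/\bigl(1+G'(1-q)\bigr)$ and $p_i(\mathrm{orange})=qG'(1-q)(1-\rho)+q\rho=2qG'(1-q)/\bigl(1+G'(1-q)\bigr)$, with $p_i(\mathrm{green})$ obtained by complementation, matching the stated formulas after substituting the value of $\rho$. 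For part (2) the tree $T_3^*$ is a star whose leaves, including $u_0$, are all coloured red by the definition of its tricolouration, so $p_3(\mathrm{red})=1$ immediately.
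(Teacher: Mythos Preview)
Your proposal is correct and follows essentially the same approach as the paper: compute the red-root probability $q$ for a $\pi$-BGW tree via Lemma~\ref{lemme jointures}, set up and solve the linear fixed-point equation for the probability that $u_1$ is red in the upward component (your $\rho$ is the paper's $\widetilde q$, satisfying $\widetilde q=(1-\widetilde q)G'(1-q)$), and then combine at $u_0$ using Lemma~\ref{lemme jointures} and independence to obtain $p_i(\text{red})=q(1-\widetilde q)$ and $p_i(\text{orange})=q\widetilde q+qG'(1-q)(1-\widetilde q)$. Your additional care about why the recursion for $\rho$ is legitimate on the infinite spine (termination at good or infinite-degree vertices) is a welcome elaboration of a point the paper leaves implicit, but the argument is the same.
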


\begin{proof}
The case of regime 3 is obvious, let us focus on regime 1 and 2. Let $T$ be a BGW tree with reproduction law $\pi$. Denote by $q$ the probability that the root of $T$ is red. From Lemma \ref{lemme jointures} we deduce that
\begin{equation*}
q = \sum_{k\geq 0}\pi_k(1-q)^k = G(1-q).
\end{equation*}
Let $\widetilde T$ be the tree obtained from $T_i^*$ by cutting the edge between $u_0$ and $u_1$ and keeping the component containing $u_1$. Let $\widetilde q$ be the probability that $u_1$ is red in $\widetilde T$. Then, from Lemma \ref{lemme jointures} again,
\begin{equation*}
\widetilde q = \sum_{k\geq 1} k\pi_k(1-q)^{k-1} (1-\widetilde q) = (1-\widetilde q)G'(1-q).
\end{equation*}
Finally,
\begin{equation*}
p_i(\text{red})=\sum_{k\geq 0} \pi_k(1-q)^k (1-\widetilde q) = \frac{q}{1+G'(1-q)}. 
\end{equation*}
And
\begin{equation*}
p_i(\text{orange})=\sum_{k\geq 0} \pi_k(1-q)^k \widetilde q + \sum_{k\geq 1}k\pi_k(1-q)^{k-1}q(1-\widetilde q)=\frac{2qG'(1-q)}{1+G'(1-q)}.
\end{equation*}
Finally we deduce the value $p_i(\text{green})$ by the law of total probability.
\end{proof}

\section{Proof of Theorem \ref{main thm}}
\label{sec proof}

All this section is devoted to the proof of Theorem \ref{main thm}. We keep all the notation of Theorem \ref{main thm} and suppose that we are in regime $i=1$ or in regime $i\in \{2,3\}$ with complete condensation. Let $c$ be a colour in $\{\text{red, green, orange}\}$. Recall that $p_i(c)$ is the probability that the vertex $u_0$ has colour $c$ in the tree $T_i^*$ in regime $i$. The idea is to prove the convergence of the first two moments of $n_c(T_n)/n$, namely
\begin{equation*}
\frac{1}{n}\esp{n_c(T_n)} \xrightarrow[n \to \infty]{} p_i(c) ~~~~\text{and}~~~~ \frac{1}{n^2}\esp{n_c(T_n)^2} \xrightarrow[n \to \infty]{} p_i(c)^2.
\end{equation*}
Then, using Lemma \ref{lemme cvg lp}, we will conclude that $n_c(T_n)/n$ converges in $L^p$ towards $p_i(c)$ for all $p>0$. Actually, the convergence of the second moment won't be required in regime 3. Recall that the explicit computation of $p_i(c)$ can be found in Lemma \ref{lemme calcul pic}.
\begin{lemma}
\label{lemme cvg lp}
Let $(X_n)$ be a sequence of random variables with values in $[0,1]$, and $\alpha \in [0,1]$. Suppose that one of the following condition is satisfied.
\begin{itemize}
    \item The convergences $\esp{X_n} \to \alpha$ and $\esp{X_n^2} \to \alpha^2$ hold when $n\to \infty$.
    \item The convergence $\esp{X_n} \to \alpha$ holds when $n\to \infty$ and $\alpha=1$.
\end{itemize} 
Then for all $p>0$, $(X_n)$ converges towards $\alpha$ in $L^p$.
\end{lemma}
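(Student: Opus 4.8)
The plan is to reduce everything to an estimate on $\esp{|X_n - \alpha|^p}$ using only the boundedness $X_n \in [0,1]$ together with the given moment hypotheses; the key structural observation, valid for bounded variables, is that $L^2$ (resp.\ $L^1$) control upgrades automatically to $L^p$ control for every $p>0$.

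For the first case, I would first establish convergence in $L^2$. Expanding the square, $\esp{(X_n-\alpha)^2} = \esp{X_n^2} - 2\alpha\esp{X_n} + \alpha^2$, which converges to $\alpha^2 - 2\alpha^2 + \alpha^2 = 0$ by the two hypotheses. To pass from $L^2$ to $L^p$, I would split on the value of $p$: since $|X_n - \alpha| \leq 1$, for $p \geq 2$ one has $|X_n-\alpha|^p \leq |X_n-\alpha|^2$, so $\esp{|X_n-\alpha|^p} \leq \esp{(X_n-\alpha)^2} \to 0$; for $0 < p < 2$, the concavity of $x \mapsto x^{p/2}$ and Jensen's inequality give $\esp{|X_n-\alpha|^p} = \esp{(|X_n-\alpha|^2)^{p/2}} \leq \left(\esp{(X_n-\alpha)^2}\right)^{p/2} \to 0$. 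In either range $X_n \to \alpha$ in $L^p$.

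For the second case, where $\alpha = 1$, I would exploit that $1 - X_n \in [0,1]$ directly, so that the second moment is not needed. Here $\esp{|X_n - 1|^p} = \esp{(1-X_n)^p}$, and $\esp{1-X_n} = 1 - \esp{X_n} \to 0$. For $p \geq 1$, the bound $(1-X_n)^p \leq 1-X_n$ gives $\esp{(1-X_n)^p} \leq \esp{1-X_n} \to 0$; for $0 < p < 1$, Jensen applied to the concave map $x \mapsto x^p$ gives $\esp{(1-X_n)^p} \leq \left(\esp{1-X_n}\right)^p \to 0$. This yields $X_n \to 1$ in $L^p$.

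No step here is a genuine obstacle; the only care needed is to treat the ranges $p \geq 2$ and $p < 2$ (resp.\ $p \geq 1$ and $p < 1$) separately, since the Lyapunov/Jensen comparison only moves the estimate in the favourable direction when the exponent acts as a contraction. The boundedness of the $X_n$ is what makes both the high-$p$ monotonicity bound and the low-$p$ Jensen bound available at once.
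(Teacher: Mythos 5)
Your proof is correct, and it takes a genuinely different route from the paper's. The paper first deduces convergence in probability (via Chebyshev--Markov in the first case, and via the elementary bound $\esp{X_n} \leq 1-\varepsilon\,\proba{X_n\leq 1-\varepsilon}$ in the second), and then upgrades to $L^p$ convergence by a soft compactness argument: it takes an arbitrary accumulation point of $\left(\esp{|X_n-\alpha|^p}\right)_n$, extracts an almost surely convergent sub-subsequence, and applies dominated convergence to conclude the accumulation point is $0$. You instead work directly with the $p$-th moment, splitting on whether the exponent is a contraction: monotonicity of $x\mapsto x^{p/2}$ on $[0,1]$ for $p\geq 2$ (resp.\ $x\mapsto x^p$ for $p\geq 1$ in the second case) and Jensen's inequality for the concave power in the complementary range. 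Both arguments are complete; yours is more quantitative, yielding the explicit bound $\esp{|X_n-\alpha|^p}\leq \left(\esp{(X_n-\alpha)^2}\right)^{\min(1,\,p/2)}$ and hence a rate, while the paper's avoids any case analysis on $p$ by invoking the standard fact that convergence in probability plus uniform boundedness gives $L^p$ convergence for all $p$. One small point worth making explicit in your write-up: for $p\geq 2$ you use $|X_n-\alpha|\leq 1$, which holds precisely because both $X_n$ and $\alpha$ lie in $[0,1]$; you do state this, so there is no gap.
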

\begin{proof}[Proof of Lemma \ref{lemme cvg lp}]
First, we show that $(X_n)$ converges towards $\alpha$ in probability. Let $\varepsilon >0$. In the first case we use Markov's inequality which gives  
\begin{equation*}
\proba{|X_n-\alpha|\geq \varepsilon} \leq \frac{\esp{(X_n-\alpha)^2}}{\varepsilon^2} \xrightarrow[n \to \infty]{} 0.
\end{equation*}
In the second case we notice that
\begin{equation*}
\esp{X_n} \leq (1-\varepsilon)\proba{X_n\leq 1-\varepsilon}+\proba{X_n> 1-\varepsilon} = 1-\varepsilon \proba{X_n\leq 1-\varepsilon}.
\end{equation*}
Consequently $\lim_{n\to\infty}\proba{X_n\leq 1-\varepsilon} = 0$ and the convergence in probability is shown in both cases. Second, let $\ell$ be an accumulation point of the sequence $(\esp{|X_n-\alpha|^p})_n$ and $(n_k)_k$ be an extraction such that the convergence to $\ell$ occurs. From $(X_{n_k})_k$ we can extract a subsequence that converges almost surely to $\alpha$. From the dominated convergence theorem we deduce that $\ell = 0$.
\end{proof}
Let $v_n,v'_n$ be vertices chosen independently and uniformly in $T_n$. Notice that
\begin{equation*}
\frac{1}{n}\esp{n_c(T_n)} = \proba{v_n \text{ has colour }c\text{ in }T_n} ~~~~\text{and} 
\end{equation*}
\begin{equation*}
\frac{1}{n^2}\esp{n_c(T_n)^2} = \proba{v_n\text{ and }v'_n \text{ have colour }c\text{ in }T_n}.    
\end{equation*}
\paragraph*{Convergence of the first moment} First we prove the convergence of the first moment. Recall the notation of Section \ref{sec tricoloration arbre infini} when defining the tricolouration of the infinite trees $T_1^*$, $T_2^*$ and $T_3^*$. Denote by $k \geq 0$ the first positive integer such that $u_k$ is good ($k=1$ almost surely in regime 3). Let $\tau_i^*$ be the subtree of $\mathcal U_\infty^\bullet$ obtained from $T_i^*$ by cutting the edge between $u_k$ and $u_{k+1}$ and keeping the component containing $u_0$ ($\tau_3^*=T_3^*$ in regime 3). Note that, by construction, the tricolouration of $\tau_i^*$ is the restriction of its tricolouration in $T_i^*$ and that $u_k$ is green. The following definition introduces a useful order relation between trees.
\begin{definition}
\label{defi presque fringe subtree}
Let $T$ and $t$ be subtrees of $\mathcal U_\infty^\bullet$ such that $t$ is rooted at $u_j$ for some $j\geq 0$. Suppose that $u_j$ is also a vertex of $T$ and denote by $E_j$ the set of edges of $T$ adjacent to $u_j$. We write $t \preceq T$ if there exists a subset $e_j \subset E_j$ such that $t$ is the tree obtained from $T$ by cutting all the edges from $e_j$ and keeping the component containing $u_j$ (see Figure \ref{fig presque fringe subtree}).
\end{definition}

\begin{figure}
\begin{center}
\begin{tikzpicture}
    [vertexdot/.style = {draw,circle,fill,inner sep=1.5pt},scale=0.8]
    
    \node[vertexdot,label={[right]\footnotesize $u_0$}] (u0) at (0,0) {};
    \node[vertexdot,label={[right]\footnotesize $u_1$}] (u1) at (0,1) {};
    \node[vertexdot] (u1g) at (-0.7,1) {};
    \node[vertexdot] (u1d) at (0.7,1) {};
    \node[vertexdot] (u1dd) at (1.4,1) {};
    \node[vertexdot,label={[right]\footnotesize $u_2$}] (u2) at (0,2) {};
    \node[vertexdot] (u2g) at (-0.8,2.2) {};
    \node[vertexdot] (u2d) at (0.8,2.2) {};
    \node[vertexdot,label={[right]\footnotesize $u_3$}] (u3) at (0,3) {};
    
    \node (a) at (0,-0.4) {$a$};
    \node (b) at (-0.7,0.65) {$b$};
    \node (c) at (0.7,0.6) {$c$};
    \node (d) at (1.4,0.65) {$d$};
    
    \node (T) at (-0.7,3.2) {$T$};
    
    \draw (u0)--(u1)--(u2)--(u3);
    \draw (u1g)--(u2)--(u1d);
    \draw (u2)--(u1dd);
    \draw (u2g)--(u3)--(u2d);
    
    \draw (u0) to[out=-45,in=225,distance=1cm] (u0);
    \draw (u1g) to[out=-45,in=225,distance=1cm] (u1g);
    \draw (u1d) to[out=-45,in=225,distance=1cm] (u1d);
    \draw (u1dd) to[out=-45,in=225,distance=1cm] (u1dd);
    \draw (u2g) to[out=-45,in=225,distance=1cm] (u2g);
    \draw (u2d) to[out=-45,in=225,distance=1cm] (u2d);
\end{tikzpicture}
\begin{tikzpicture}
    [vertexdot/.style = {draw,circle,fill,inner sep=1.5pt},scale=0.8]
    
    \node[vertexdot] (u0) at (0,0) {};
    \node[vertexdot] (u1) at (0,1) {};
    \node[vertexdot] (u1g) at (-0.7,1) {};
    \node[vertexdot] (u1d) at (0.7,1) {};
    \node[vertexdot] (u2) at (0,2) {};
    
    \node (a) at (0,-0.4) {$a$};
    \node (b) at (-0.7,0.65) {$b$};
    \node (c) at (0.7,0.6) {$c$};
    
    \node (t1) at (-0.6,2.2) {$t_1$};
    
    \draw (u0)--(u1)--(u2);
    \draw (u1g)--(u2);
    \draw (u2)--(u1d);
    
    \draw (u0) to[out=-45,in=225,distance=1cm] (u0);
    \draw (u1g) to[out=-45,in=225,distance=1cm] (u1g);
    \draw (u1d) to[out=-45,in=225,distance=1cm] (u1d);
\end{tikzpicture}
\begin{tikzpicture}
    [vertexdot/.style = {draw,circle,fill,inner sep=1.5pt},scale=0.8]
    
    \node[vertexdot] (u0) at (0,0) {};
    \node[vertexdot] (u1) at (0,1) {};
    \node[vertexdot] (u1g) at (-0.7,1) {};
    \node[vertexdot] (u1dd) at (1.4,1) {};
    \node[vertexdot] (u2) at (0,2) {};
    
    \node (a) at (0,-0.4) {$a$};
    \node (b) at (-0.7,0.65) {$b$};
    \node (d) at (1.4,0.65) {$d$};
    
    \node (t2) at (-0.6,2.2) {$t_2$};
    
    \draw (u0)--(u1)--(u2);
    \draw (u1g)--(u2);
    \draw (u2)--(u1dd);
    
    \draw (u0) to[out=-45,in=225,distance=1cm] (u0);
    \draw (u1g) to[out=-45,in=225,distance=1cm] (u1g);
    \draw (u1dd) to[out=-45,in=225,distance=1cm] (u1dd);
\end{tikzpicture}
\begin{tikzpicture}
    [vertexdot/.style = {draw,circle,fill,inner sep=1.5pt},scale=0.8]
    
    \node[vertexdot] (u1g) at (-0.7,1.65) {};
    \node[vertexdot] (u2) at (0,2.65) {};
    \node[] (u0) at (0,0) {};
    
    \node (b) at (-0.7,1.3) {$b$};
    
    \node (t3) at (-0.6,2.85) {$t_3$};
    
    \draw (u1g)--(u2);
    
    \draw (u1g) to[out=-45,in=225,distance=1cm] (u1g);
\end{tikzpicture}
\hspace{1em}
\begin{tikzpicture}
    [vertexdot/.style = {draw,circle,fill,inner sep=1.5pt},scale=0.8]
    
    \node[vertexdot] (u0) at (0,0) {};
    \node[vertexdot] (u1) at (0,1) {};
    \node[vertexdot] (u1g) at (-0.7,1) {};
    \node[vertexdot] (u1d) at (0.7,1) {};
    \node[vertexdot] (u2) at (0,2) {};
    
    \node (a) at (0,-0.4) {$a$};
    \node (c) at (0.7,0.6) {$c$};
    
    \node (t4) at (-0.6,2.2) {$t_4$};
    
    \draw (u0)--(u1)--(u2);
    \draw (u1g)--(u2);
    \draw (u2)--(u1d);
    
    \draw (u0) to[out=-45,in=225,distance=1cm] (u0);
    \draw (u1d) to[out=-45,in=225,distance=1cm] (u1d);
\end{tikzpicture}
\caption{Illustration of Definitions \ref{defi sous arbre} and \ref{defi presque fringe subtree}. Only the tree $t_1$ satisfies $t_1 \preceq T$. The tree $t_3$ is not even a subtree of $\mathcal U^\bullet_\infty$ since it doesn't contain $u_0$. The tree $t_2$ is not a subtree of $\mathcal U^\bullet_\infty$ either since it has a hole between $u_1$ and the rightmost child of $u_2$. Finally $t_4$ is a subtree of $\mathcal U_\infty^\bullet$ but doesn't satisfy $t_4 \preceq T$ because the descendants of the left child of $u_2$ are missing.}
\label{fig presque fringe subtree}
\end{center}
\end{figure}
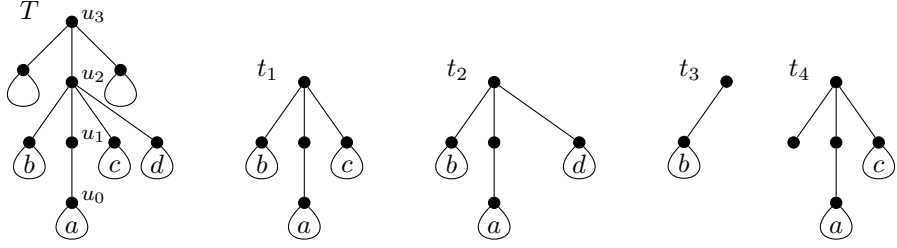

Let $\mathcal F$ be the set of rooted subtrees $t$ of $\mathcal U_\infty^\bullet$ such that the root $u_j \in t$ is the only good vertex of $t$. For $t \in \mathcal F$ such that the root $u_j$ of $t$ has finite degree, denote by $v_\ell(t)$ (resp. $v_r(t)$) the leftmost (resp. rightmost) child of the root $u_j$ of $t$. Let $\mathcal F_0$ be the set of elements $t \in \mathcal F$ such that: the root $u_j$ of $t$ has finite degree ; $u_j$ has exactly two red neighbors distinct from $u_{j-1}$ ; and ($v_\ell(t)=u_{j-1}$ or $v_\ell(t)$ is red) and ($v_r(t)=u_{j-1}$ or $v_r(t)$ is red). Notice that if $t_1$ and $t_2$ are distinct elements of $\mathcal F_0$, then we can't have $t_1 \preceq t_2$ nor $t_2 \preceq t_1$. Moreover for every $t_1 \in \mathcal F$ there exists a unique $t_2 \in \mathcal F_0$ such that $t_2 \preceq t_1$. In other words $\mathcal F_0$ is the set of equivalence classes for the equivalence relation $t_1\sim t_2$ iff $t_1 \preceq t_2$ or $t_2 \preceq t_1$. Notice that almost surely $\tau_i^* \in \mathcal F$.

Fix $\varepsilon >0$. Let $\mathcal T$ be a finite subset of $\mathcal F_0$ such that the event $\{\mathcal T \preceq \tau_i^*\}:=\{\exists t\in\mathcal T,\, t\preceq \tau_i^*\}$ happens with probability at least $1-\varepsilon$. Let $\mathcal T(c)$ be the set of trees $t\in \mathcal T$ such that $u_0$ has colour $c$ in $t$. Remember that we see $(T_n,v_n)$ as a subtree of $\mathcal U_\infty^\bullet$. For all $t\in \mathcal T$, define the event $A_n(t) := \{t \preceq (T_n,v_n)\}$. Using Theorem \ref{limit thm}, we have that for all $t \in \mathcal T$
\begin{equation}
\label{eq preuve premier moment}
\proba{A_n(t)} \xrightarrow[n \to \infty]{} \proba{t \preceq T_i^*}.    
\end{equation}
The properties of $\tau_i^*$ and $t\in\mathcal T$ imply that $t \preceq T_i^*$ if and only if $t \preceq \tau_i^*$. Thus
\begin{equation*}
\proba{t \preceq T_i^*} = \proba{t \preceq \tau_i^*}.
\end{equation*}
Denote by $E_n$ the event $\cup_{t\in\mathcal T}A_n(t)$. Notice that the event $E_n \cap \{v_n \text{ has colour }c \text{ in }T_n\}$ is equal to the event $\cup_{t\in\mathcal T(c)}A_n(t)$. It is a consequence of Lemma \ref{lemme jointures} and \ref{lemme jointure}. Notice also that for $t_1,t_2$ distinct trees of $\mathcal T$, $A_n(t_1)$ and $A_n(t_2)$ are disjoint for all $n$. Consequently,
\begin{equation*}
\proba{\{v_n \text{ has colour }c \text{ in }T_n\} \cap E_n} = \sum_{t \in \mathcal T(c)}\proba{A_n(t)} \xrightarrow[n \to \infty]{} \proba{\mathcal T(c) \preceq \tau_i^*}
\end{equation*}
Since $u_0$ has colour $c$ in $\tau_i^*$ if and only if $u_0$ has colour $c$ in $T_i^*$,
\begin{equation*}
\proba{\mathcal T(c)\preceq\tau_i^*} = \proba{u_0 \text{ has colour }c \text{ in }T_i^* \text{ and } \mathcal T\preceq\tau_i^*} \in [p_1(c)-\varepsilon,p_1(c)+\varepsilon].
\end{equation*}
Finally, using Theorem \ref{limit thm} again, one has
\begin{equation*}
\proba{E_n} \xrightarrow[n \to \infty]{} \proba{\mathcal T\preceq\tau_i^*} \geq 1-\varepsilon.
\end{equation*}
The convergence $\esp{n_c(T_n)/n} \to p_1(c)$ readily follows.

\paragraph*{Convergence of the second moment in regime 1 and 2} 
The next step is to show convergence for the second moment in regime $i=1$ or $i=2$ with complete condensation. Let $c'$ be another colour in $\{\text{red, green, orange}\}$. We will actually show that
\begin{equation*}
\frac{1}{n^2}\esp{n_c(T_n)n_{c'}(T_n)} \xrightarrow[n \to \infty]{} p_i(c)p_i(c').
\end{equation*}
We keep the notation of the previous part which shows the convergence of the first moment. For all $t\in\mathcal T$, let $A'_n(t) := \{t \preceq (T_n,v'_n)\}$ and $E'_n := \cup_{t\in\mathcal T}A'_n(t)$. We have,
\begin{equation}
\label{eq preuve second moment}
\proba{\{v_n \text{ has colour }c \text{ in }T_n\} \cap E_n \cap E'_n} = \sum_{t \in \mathcal T(c)}\sum_{t'\in\mathcal T(c')}\proba{A_n(t)\cap A'_n(t')}.    
\end{equation}
Fix $t,t'\in\mathcal T$. Recall that the trees $t,t'$ and $T_n$ are rooted plane trees, thus we can consider their so-called \L ukasiewicz walk. More precisely, let $T$ be a rooted plane tree with $n$ vertices and $w$ be a vertex of $T$. Denote by $\ell(w,T)$ the rank of $w$ in $T$ for the lexicographic order. Equivalently, $w$ is the $\ell(w,T)$-th vertex of $T$ explored by the depth first search starting from the root of $T$. Let $w_1,\dots,w_n$ be the vertices of $T$ ordered according to the lexicographic order (so $\ell(w_i,T)=i$ for all $i$). The \textit{\L ukasiewicz walk} associated with $T$ is the sequence $(s_k)_{1\leq k \leq n}$ such that $s_0=0$ and $s_{k}-s_{k-1}+1$ is the out-degree of $w_k$ for all $k \in \{1,\dots,n\}$. An important property of the \L ukasiewicz walk is that it uniquely encodes its tree, meaning that the tree $T$ can be retrieved from $(s_k)_{1\leq k \leq n}$. Let $(S_k^{(n)})_{0\leq k\leq n}$ be the \L ukasiewicz walk associated with the tree $T_n$. Let $X_1,\dots,X_n,\dots$ be i.i.d random variables such that $\proba{X_1=m}=\pi_{m+1}$ for all integer $m\geq -1$ and set $S_k := \sum_{i=1}^k X_i$ for all $k \geq 0$. It is well known that, the random walk $(S_k)_{0\leq k\leq n}$, starting at 0 and conditioned on reaching $-1$ for the first time at time $n$, has the same law as $(S_k^{(n)})_{0\leq k\leq n}$. Let $m:=|t|$ be the number of vertices of $t$ and $m':=|t'|$. Let $(s_k)_{0\leq k \leq m}$ and $(s'_k)_{0\leq k \leq m'}$ be, respectively, the \L ukasiewicz walks associated with $t$ and $t'$ and denote by $x_k:=s_{k+1}-s_k$ and $x'_k:=s'_{k+1}-s'_k$ the associated steps. Write $k_0:=\ell(u_0,t)$, $k'_0:=\ell(u_0,t')$, $i_n:=\ell(v_n,T_n)$ and $i'_n:=\ell(v'_n,T_n)$. The indices $i_n$ and $i'_n$ are independent random elements of $\{1,\dots,n\}$ with uniform distribution. The event $A_n(t)$ happens if and only if the \L ukasiewicz walk $(S_k^{(n)})_{0\leq k\leq n}$ coincides with $(s_k)_{0\leq k \leq m}$, up to a vertical shifting, on the interval $\llbracket i_n-k_0,i_n+m-k_0 \rrbracket$. The same goes for $A'_n(t')$. More precisely
\begin{equation*}
A_n(t) \cap A'_n(t') = \{X_{i+i_{n}-k_0}^{(n)}=x_i ~\forall i\in\llbracket 1,m\rrbracket \text{ and }X_{i+i'_{n}-k'_0}^{(n)}=x'_i ~\forall i\in \llbracket 1,m' \rrbracket\}.
\end{equation*}
Applying the reverse Vervaat transform, we can change the initial excursion type conditioning into a bridge type conditioning (see e.g.~\cite[Sec.\,6.1]{pitmanbook}). Namely
\begin{multline*}
\proba{A_n(t) \cap A'_n(t')} \\
= \proba{X_{i+i_{n}-k_0}=x_i ~\forall i\in\llbracket 1,m\rrbracket \text{ and }X_{i+i'_{n}-k'_0}=x'_i ~\forall i\in \llbracket 1,m' \rrbracket \,\big\vert\, X_n=-1}.
\end{multline*}
Denote by $D_n$ the event $\{\llbracket i_n-k_0+1,i_n-k_0+m \rrbracket \cap \llbracket i'_n-k'_0+1,i'_n-k'_0+m'\rrbracket = \emptyset\}$. This event has a probability tending to 1 and one can see that
\begin{multline*}
\proba{A_n(t)\cap A'_n(t') \cap D_n} \\
= \proba{X_i=x_i ~\forall i\in\llbracket 1,m\rrbracket \text{ and }X_{i+m}=x'_i ~\forall i\in \llbracket 1,m' \rrbracket \,\big\vert\, X_n=-1}\proba{D_n}.
\end{multline*}
According to \cite[Thm.~11.7]{janson} the steps $X_1,\dots,X_{m+m'}$ conditioned on $\{X_n=-1\}$ are asymptotically independent and the conditioning fades for large values of $n$, consequently
\begin{align*}
\lim_{n\to\infty}\proba{A_n(t) \cap A'_n(t')} &= \proba{X_i=x_i ~\forall i\in\llbracket 1,m\rrbracket}\proba{X_i=x'_i ~\forall i\in\llbracket 1,m'\rrbracket} \\
&= \lim_{n\to\infty} \proba{A_n(t)}\proba{A'_n(t')}.
\end{align*}
Finally, using (\ref{eq preuve premier moment}) and (\ref{eq preuve second moment}), we have that
\begin{equation*}
\lim_{n\to\infty}\proba{\{v_n \text{ has colour }c \text{ in }T_n\}\cap E_n \cap E'_n} = \proba{\mathcal T(c)\preceq\tau_i^*}\proba{\mathcal T(c)\preceq\tau_i^*}.
\end{equation*}
and the result follows.

\section*{Acknowledgements}
I am grateful to Igor Kortchemski for his careful reading of the manuscript and for telling me Frederic Chapoton’s suggestion to consider canonical tricolorations of random trees.

\bibliographystyle{alpha}
\bibliography{biblio}

\end{document}